\newtheorem{theorem}{Theorem}
\newtheorem{corollary}[theorem]{Corollary}
\newtheorem{lemma}[theorem]{Lemma}
\begin{document}

\title{Determinants and Inverses of Circulant Matrices with Jacobsthal and
Jacobsthal-Lucas Numbers}
\author{Durmu\c{s} Bozkurt\thanks{
e-mail: dbozkurt@selcuk.edu.tr}\ {and} Tin-Yau Tam\thanks{%
e-mail: tamtiny@auburn.edu} \and Department of Mathematics, Sel\c{c}uk
University, \\
{42075 Kampus, Konya, Turkey } \and Department of Mathematics and {Statistics%
}, Auburn University, \\
{AL 36849-5310, USA}}
\maketitle

\begin{abstract}
Let $n\geq 3$ and $\mathbb{J}_{n}:=\mbox{circ}(J_{1},J_{2},\ldots ,J_{n})$
and $\mathbb{j} _{n}:=\mbox{circ}(j_{0},j_{1},\ldots , {j}_{n-1})$ be {the} $%
n\times n$ circulant matrices{,} associated with {the} $n$th Jacobsthal {%
number} $J_{n}$ and {the $n$th} Jacobsthal-Lucas number $j_{n}$,
respectively. The determinants of $\mathbb{J}_{n}$\ and $\mathbb{j}_{n}$\ {%
are obtained in terms of} the Jacobsthal and Jacobsthal-Lucas numbers. {\
These imply that} $\mathbb{J}_{n}$\ and $\mathbb{j}_{n}$\ are invertible. {%
We also derive the} inverses of $\mathbb{J}_{n}$ and $\mathbb{j}_{n}$.
\end{abstract}

\section{Introduction}

{The $n\times n$ circulant matrix} {$C_n:=\mbox{circ}(c_{0},c_{1},\ldots
,c_{n-1})$, assoicated with the numbers $c_0, \dots, c_{n-1}$,} is defined
as 
\begin{equation}
C_n{:=}\left[ 
\begin{array}{ccccc}
c_{0} & c_{1} & \ldots & c_{n-2} & c_{n-1} \\ 
c_{n-1} & c_{0} & \ldots & c_{n-3} & c_{n-2} \\ 
\vdots & \vdots & \ddots & \vdots & \vdots \\ 
c_{2} & c_{3} & \ldots & c_{0} & c_{1} \\ 
c_{1} & c_{2} & \ldots & c_{n-1} & c_{0}%
\end{array}%
\right].  \label{2}
\end{equation}%
%
%
%
%
%
Circulant matrices have a {wide} range of {applications, for examples} in
signal processing, coding theory, image processing, digital image disposal,
self-regress {\ design} and so on. {Numerical} solutions of the certain
types of elliptic and parabolic partial differential equations with periodic
boundary conditions often involve linear systems {associated with circulant
matrices} [9-11].

\textcolor[rgb]{0.93,0.08,0.17}{The eigenvalues and eigenvectors of $C_n$ are well-known [14]: $$\lambda_j =
\sum_{k=0}^{n-1}c_{k}\omega^{jk}, \quad j=0, \dots, n-1,
$$ where
$\omega {:=}\exp (\frac{2\pi i}{n})$ and $i:=\sqrt{-1}$ and
the corresponding  eigenvectors
$$
v_j = (1, \omega^j, \omega^{2j}, \dots, \omega^{(n-1)j})^T,\quad j=0, \dots, n-1.$$ Thus}
we have the determinants and inverses of nonsingular circulant matrices
[1,3,4,14]{:} 
\begin{equation*}
\det (C_{n})=\textcolor[rgb]{0.93,0.08,0.17}{
\prod_{j=0}^{n-1}(\sum_{k=0}^{n-1}c_{k}\omega^{jk})} {,}
\end{equation*}%
%
%
and%
\begin{equation*}
C_{n}^{-1}=\mbox{circ}(a_{0},a_{1},\ldots ,a_{n-1}){,}
\end{equation*}%
%
%
%
%
%
where 
\textcolor[rgb]{0.93,0.08,0.17}{$a_{j}{:=}\frac{1}{n}
\sum_{k=0}^{n-1} \lambda_k \omega^{-kj}$,} and $r=0,1,\dots ,n-1$ [4]. When {%
$n$ is getting large}, the above formulas {are not very handy to use}. 
\textcolor[rgb]{0.93,0.08,0.17}{If there is some structure among $c_0, \dots, c_{n-1}$, we may be able to get
more explicit forms of the eigenvalues, determinants and inverses of $C_n$.}
Recently, {studies} on the circulant matrices involving {interesting} number
sequences {appeared}. In [1] the determinants and inverses of the circulant
matrices $\mathbb{A}_{n}=\mbox{circ}(F_{1},F_{2},\ldots ,F_{n})$ and $%
\mathbb{B}_{n}=\mbox{circ}(L_{1},L_{2},\ldots ,L_{n})$ {are} derived{,}
where $F_{n}$ and $L_{n}$ {are the} $n$th Fibonacci and Lucas numbers,
respectively. In [2] the $r$-circulant matrix {is} defined and its {norm is}
computed. {The} norms of Toeplitz matrices [13] involving Fibonacci and
Lucas numbers {are obtained} [5]. Miladinovic and Stanimirovic [6] {gave} an
explicit formula of the Moore-Penrose inverse of singular generalized
Fibonacci matrix. Lee and et al. found the factorizations and eigenvalues of
Fibonacci and symmetric Fibonacci matrices [7].

{When} $n\geq 2,$ the Jacobsthal and Jacobsthal-Lucas sequences $\{J_{n}\}$
and $\{j_{n}\}$ are defined by $J_{n}=J_{n-1}+2J_{n-2}$ and $%
j_{n}=j_{n-1}+2j_{n-2}$\ with initial conditions $J_{0}=0${,} $J_{1}=1${,} $%
j_{0}=2${,} and $j_{1}=1${,} respectively. Let $\mathbb{J}_{n}:=\mbox{circ}%
(J_{1},J_{2},\ldots ,J_{n})$ and $\mathbb{j} _{n}:=\mbox{circ}%
(j_{0},j_{1},\ldots , {j}_{n-1})$. 
The aim of this paper is to establish {\ some useful} formulas for the
determinants and inverses of $\mathbb{J}_{n}$ and $\mathbb{j}_{n}$ using the
nice properties of the Jacobsthal and Jacobsthal-Lucas numbers. \textbf{%
Question: How about eigenvalues?} {Matrix decompositions are derived for $%
\mathbb{J}_{n}$ and $\mathbb{j}_{n}$ in order to obtain the results. }


\section{Determinants of {$\mathbb{J}_{n}$ and $\mathbb{j}_{n}$}}


{Recall that} $\mathbb{J}_{n}:=\mbox{circ}(J_{1},J_{2},\ldots ,J_{n})$ and $%
\mathbb{j}_{n}:=\mbox{circ}(j_{0},j_{1},\ldots ,{j}_{n-1})${, i.e.,} 
where $J_{k}$ and $j_{k}$ are {the} $k$th Jacobsthal and
Jacobsthal-Lucas numbers{, respectively,} with the {recurrence} relations $%
J_{k}=J_{k-1}+2J_{k-2}${,} $j_{k}=j_{k-1}+2j_{k-2}${, and} the initial
conditions $J_{0}=0${,} $J_{1}=1${,} $\ j_{0}=2${,} and $j_{1}=1\ (k\geq 2)$%
. Let $\alpha $ and $\beta $ be the roots of $x^{2}-x-2=0.$ {Using the}
Binet formulas [8, p.40] {for} the sequences $\{J_{n}\}$ and $\{j_{n}\} $, {%
one has} 
\begin{equation}
J_{n}=\frac{\alpha ^{n}-\beta ^{n}}{3}=\frac{1}{3}[2^{n}-(-1)^{n}]  \label{1}
\end{equation}%
and 
\begin{equation}
j_{n}=\alpha ^{n}+\beta ^{n}=2^{n}+(-1)^{n}.  \label{20}
\end{equation}%
%
%
%

\begin{theorem}
\label{t1} {Let $n\geq 3$}. Then%
\begin{equation}
\det (\mathbb{J}_{n})=(1-J_{n+1})^{n-2}(1-J_{n})+2\sum_{k=1}^{n-2}\left[
J_{k}(1-J_{n+1})^{k-1}(2J_{n})^{n-k-1}\right]{.}  \label{4}
\end{equation}%
%
%
%
%
%
\end{theorem}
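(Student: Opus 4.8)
The plan is to sidestep the eigenvalue product $\prod_{j=0}^{n-1}\sum_{k}c_k\omega^{jk}$, which is awkward here, and instead reduce $\mathbb{J}_n$ by column operations that exploit the Jacobsthal recurrence $J_k=J_{k-1}+2J_{k-2}$ -- this is the ``matrix decomposition'' flagged in the introduction. Concretely, I would right-multiply $\mathbb{J}_n$ by the unit upper-triangular matrix $M$ that realizes the column operations $C_j\mapsto C_j-C_{j-1}-2C_{j-2}$ for $j=3,\dots,n$ (so $M_{jj}=1$, $M_{j-1,j}=-1$, $M_{j-2,j}=-2$, and columns $1,2$ are untouched). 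Since $M$ is triangular with unit diagonal, $\det M=1$, whence $\det\mathbb{J}_n=\det(\mathbb{J}_nM)$.

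Second, I would read off the entries of $A:=\mathbb{J}_nM$. Along row $i$ the Jacobsthal indices increase by one within each of its two runs (columns $i,\dots,n$ carry $J_1,\dots,J_{n+1-i}$ and columns $1,\dots,i-1$ carry $J_{n+2-i},\dots,J_n$), so the combination $J_m-J_{m-1}-2J_{m-2}$ vanishes except at the columns whose index triple straddles the single junction between the two runs. Using $J_{n+1}=J_n+2J_{n-1}$ and $J_1=J_2=1$, the only surviving entries in columns $\ge 3$ are $A_{i,i}=1-J_{n+1}$ (for $i\ge 3$) and $A_{i,i+1}=-2J_n$ (for $i\ge 2$); the first row collapses to $(1,1,0,\dots,0)$ and columns $1,2$ retain their Jacobsthal values. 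Thus $A$ is upper bidiagonal in columns $3,\dots,n$ (diagonal $1-J_{n+1}$, superdiagonal $-2J_n$), bordered by two dense columns.

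Third, I would evaluate $\det A$ by a Laplace expansion that uses the bidiagonal columns. For each column $j\ge 3$ the occupied row is either $j$ (contributing $1-J_{n+1}$) or $j-1$ (contributing $-2J_n$), and distinctness forces these choices to switch from ``superdiagonal'' to ``diagonal'' at a single threshold $t$; the two leftover rows, namely row $1$ and row $t-1$, are assigned to the border columns $1,2$. The all-diagonal choice $t=3$ leaves the $2\times 2$ border block $\left[\begin{smallmatrix}J_1&J_2\\ J_n&J_1\end{smallmatrix}\right]$ of determinant $1-J_n$, producing the leading term $(1-J_{n+1})^{n-2}(1-J_n)$. For each remaining threshold, the two admissible placements of rows $1$ and $t-1$ into columns $1,2$ enter with opposite parity, so they combine through $J_{m+1}-J_m=2J_{m-1}$ into a single factor $2J_k$ with $k=n+2-t$; the accompanying powers are $(1-J_{n+1})^{\,k-1}$ and $(2J_n)^{\,n-k-1}$, which reproduces the sum in \eqref{4}.

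The \emph{main obstacle} will be the sign bookkeeping in this last step: the $(-1)$ carried by each superdiagonal entry $-2J_n$ together with the parity of the two border placements must be shown to cancel so that every term carries the sign displayed in \eqref{4}, and the extreme thresholds $t=3$ and $t=n+1$, along with the small-index border entries (the equality $J_1=J_2$ and the row-$2$ anomaly where $A_{2,2}=J_1$ rather than $1-J_{n+1}$), need separate checking. Equivalently, and perhaps cleaner for the write-up, one can expand $A$ along its last column to obtain the first-order recursion $D_p=(1-J_{n+1})D_{p-1}+(2J_n)\,m_{p}$ for the leading principal minors $D_p$, where $m_p$ is the product-form minor, and then solve it; this packages the same computation without an explicit parity argument.
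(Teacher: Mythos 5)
Your proposal is correct, and the sign obstacle you flag does resolve as you hope. For threshold $t$, the $t-3$ superdiagonal entries contribute $(-2J_n)^{t-3}$, while the permutation sending column $2$ to row $t-1$ and columns $3,\dots,t-1$ to rows $2,\dots,t-2$ is a single cycle of length $t-2$ with sign $(-1)^{t-3}$, so the two powers of $-1$ cancel; the second border placement differs by one transposition, and the pair combines into $\bigl(J_{n+4-t}-J_{n+3-t}\bigr)(2J_n)^{t-3}(1-J_{n+1})^{n-t+1}=2J_{n+2-t}(2J_n)^{t-3}(1-J_{n+1})^{n-t+1}$, which is exactly the $k=n+2-t$ summand of \eqref{4}, while the all-diagonal threshold $t=3$ gives $(1-J_n)(1-J_{n+1})^{n-2}$. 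Your alternative closing recursion is also sound: the cofactor sign $(-1)^{(p-1)+p}$ turns the superdiagonal $-2J_n$ into $+2J_n$, with base $D_2=1-J_n$.

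Your route is genuinely different from the paper's in its second stage, though it shares the first idea: the paper's left factor $P_n$ encodes the very same recurrence combinations $J_m-J_{m-1}-2J_{m-2}=0$, applied to rows and in reversed order, producing the analogous near-bidiagonal structure. But instead of expanding that bordered matrix combinatorially, the paper right-multiplies by a second matrix $Q_n$ whose second column carries the geometric weights $\bigl(2J_n/(1-J_{n+1})\bigr)^{n-k}$, reaching a fully upper triangular $S_n=P_n\mathbb{J}_nQ_n$; formula \eqref{4} then falls out of $\det(S_n)=(2J_n)^{n-2}g_n$ after a case analysis modulo $4$ showing $\det(P_n)\det(Q_n)=\bigl(2J_n/(1-J_{n+1})\bigr)^{n-2}$. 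Your version buys real simplifications: the single factor $M$ is unimodular, so no auxiliary determinants, no mod-$4$ sign cases, and no division by $1-J_{n+1}$ are needed, and the argument covers $n=3$ uniformly (your expansion gives $8+12=20$ there) rather than as the separate check the paper performs before restricting to $n>3$. What the paper's heavier two-sided decomposition buys is reuse: in Section 3 the same $P_n$ and $Q_n$, extended by $R_n$, give $P_n\mathbb{J}_nQ_nR_n=G\oplus A$, from which $\mathbb{J}_n^{-1}$ is extracted, whereas your one-sided column reduction would not feed that inverse computation directly.
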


\begin{proof}
Obviously, $\det (\mathbb{J}_{3})=20.$ It satisfies 
\eqref{4}.
For $n>3,$ we select the matrices $P_{n}$ and $Q_{n}$ so that when we multiply
$\mathbb{J}_{n}$ with $P_{n}$ on the left and $Q_{n}$ on the right we obtain a special upper triangular matrix
that have nonzero entries only on the first two rows, main diagonal and super diagonal:%
\begin{equation}
P_{n}{:=}\begin{bmatrix}
1 & 0 & 0 & 0 & \ldots & 0 & 0 \\
-1 & 0 & 0 & 0 & \ldots & 0 & 1 \\
-2 & 0 & 0 & 0 & \ldots & 1 & -1 \\
0 & 0 & 0 & 0 & \ldots & -1 & -2 \\
\vdots & \vdots & \vdots & \vdots &  & \vdots & \vdots \\
0 & 0 & 1 & -1 & \ldots & 0 & 0 \\
0 & 1 & -1 & -2 & \ldots & 0 & 0%
\end{bmatrix}  \label{5}
\end{equation}%
and%
\begin{equation*}
Q_{n}{:=}\begin{bmatrix}
1 & 0 & 0 & \ldots & 0 & 0 \\
0 & \left( \frac{2J_{n}}{1-J_{n+1}}\right) ^{n-2} & 0 & \ldots & 0 & 0 \\
0 & \left( \frac{2J_{n}}{1-J_{n+1}}\right) ^{n-3} & 0 & \ldots & 0 & -1 \\
0 & \left( \frac{2J_{n}}{1-J_{n+1}}\right) ^{n-4} & 0 & \ldots & -1 & 0 \\
\vdots & \vdots & \vdots &  & \vdots & \vdots \\
0 & \left( \frac{2J_{n}}{1-J_{n+1}}\right) & -1 & \ldots & 0 & 0 \\
0 & 1 & 0 & \ldots & 0 & 0%
\end{bmatrix}.
\end{equation*}%
{Notice that we have the following equivalence:}
\begin{eqnarray*}
S_{n} &=&P_{n}\mathbb{J}_{n}Q_{n} \\
&=&\begin{bmatrix}
1 & f_{n} & -J_{n} & -J_{n-1} & -J_{n-2} & \ldots & -J_{4} & -J_{3} \\
& g_{n} & J_{n}-1 & -2J_{n-2} & -2J_{n-3} & \ldots & -2J_{3} & -2J_{2} \\
&  & 2J_{n} & J_{n+1}-1 &  &  &  &  \\
&  &  & 2J_{n} & J_{n+1}-1 &  &  &  \\
&  &  &  &  &  &  & 0 \\
&  &  &  & \ddots & \ddots &  &  \\
&  &  &  &  &  & J_{n+1}-1 &  \\
&  &  &  &  & 2J_{n} &  &  \\
&  &  &  &  &  & 2J_{n} & J_{n+1}-1 \\
&  &  &  &  &  &  & 2J_{n}%
\end{bmatrix}
\end{eqnarray*}%
{and $S_n$} is upper triangular, where
\begin{eqnarray*}
f_{n}&{:=}&\sum _{k=1}^{n-1}J_{k+1}\left( \frac{2J_{n}}{1-J_{n+1}}\right) ^{n-k-1}{,}
\\
g_{n}&{:=}&1-J_{n}+2\sum_{k=1}^{n-2}J_{n-k-1}\left( \frac{2J_{n}}{1-J_{n+1}}\right) ^{k}.
\end{eqnarray*} Then we have%
\begin{equation*}
\det (S_{n})=\det (P_{n})\det (\mathbb{J}_{n})\det
(Q_{n})=(2J_{n})^{n-2}g_{n}.
\end{equation*}%
Since%
\begin{equation*}
\det (P_{n})=
\begin{cases} 1& n\equiv 1,  \mbox{ or }  2\ (\mbox{mod}4) \\
-1, & n\equiv 0\ \mbox{ or } 3\ (\mbox{mod}4){,}%
\end{cases}
\end{equation*}%
and%
\begin{equation*}
\det (Q_{n})=
\begin{cases}
\left( \frac{2J_{n}}{1-J_{n+1}}\right) ^{n-2}, &  n\equiv 1 \mbox{ or } 2\
(\mbox{mod}4) \\
-\left( \frac{2J_{n}}{1-J_{n+1}}\right) ^{n-2},& n\equiv 0\mbox{ or } 3\ (\mbox{mod}%
4){,}%
\end{cases}
\end{equation*}%
for all $n>3,$%
\begin{equation*}
\det (P_{n})\det (Q_{n})=\left( \frac{2J_{n}}{1-J_{n+1}}\right) ^{n-2}
\end{equation*}%
{and} \eqref{4} follows.
\end{proof}

\begin{theorem}
\label{t2} {Let $n\geq 3$}. Then%
\begin{equation}
\det (\mathbb{j}_{n})=(2-j_{n})^{n-2}(4-j_{n-1})+\sum_{k=2}^{n-1}\left[
(2j_{k}-j_{k-1})(2-j_{n})^{k-2}(1+2j_{n-1})^{n-k}\right].  \label{60}
\end{equation}%
%
%
%
%
%
\end{theorem}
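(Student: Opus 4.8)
The plan is to mirror the proof of Theorem~\ref{t1}, exploiting the fact that the Jacobsthal--Lucas recurrence $j_k=j_{k-1}+2j_{k-2}$ has exactly the same coefficients as the Jacobsthal recurrence. First I would dispose of the base case $n=3$ directly: since $j_0=2$, $j_1=1$, $j_2=5$, we have $\mathbb{j}_3=\circu(2,1,5)$, whose determinant is $2^3+1^3+5^3-3\cdot2\cdot1\cdot5=104$, and one checks by hand that the right-hand side of \eqref{60} also equals $104$ (using $j_1=1$, $j_2=5$, $j_3=7$).

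For $n>3$ I would reduce $\mathbb{j}_n$ to upper triangular form by the same device used for $\mathbb{J}_n$. The row operations that annihilate the interior entries of a circulant are precisely the three-term combinations dictated by $j_{k}-j_{k-1}-2j_{k-2}=0$; since these coefficients coincide with the Jacobsthal ones, the very same left factor $P_n$ from \eqref{5} applies. Applying $P_n$ on the left leaves an (almost) upper-Hessenberg matrix whose interior diagonal and super-diagonal entries are the residual wrap-around terms; these are no longer $2J_n$ and $J_{n+1}-1$ but the Jacobsthal--Lucas boundary constants $1+2j_{n-1}$ and $-(2-j_n)$. I would then take a column factor $\tilde Q_n$ of exactly the same shape as $Q_n$ but with the ratio $\frac{2J_n}{1-J_{n+1}}$ replaced by $\tilde r:=\frac{1+2j_{n-1}}{2-j_n}$, so that $\tilde S_n:=P_n\mathbb{j}_n\tilde Q_n$ is genuinely upper triangular with a special top-left $2\times2$ block and constant diagonal $1+2j_{n-1}$ from the third row on. Reading off $\det(\tilde S_n)=(1+2j_{n-1})^{n-2}\tilde g_n$, where $\tilde g_n$ is the $(2,2)$ entry, recording $\det(P_n)\det(\tilde Q_n)=\tilde r^{\,n-2}$ (the mod~$4$ sign computation being identical to that in Theorem~\ref{t1}, since $\tilde Q_n$ shares its sign pattern with $Q_n$), and solving $\det(\mathbb{j}_n)=\det(\tilde S_n)/(\det P_n\det\tilde Q_n)=(2-j_n)^{n-2}\tilde g_n$, a reindexing of the sum in $\tilde g_n$ yields \eqref{60}.

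The main obstacle is the boundary bookkeeping. Although the interior cancellation is verbatim the same as in Theorem~\ref{t1}, the different initial data ($j_0=2$, $j_1=1$ against $J_0=0$, $J_1=1$) and the fact that $\mathbb{j}_n$ starts at $j_0$ rather than at $J_1$ alter every wrap-around term; in particular one must verify carefully that the top-left entry works out to $\tilde g_n=(4-j_{n-1})+\sum_{k=1}^{n-2}(2j_{n-k}-j_{n-k-1})\tilde r^{\,k}$ and that the two special rows produced by $P_n$ collapse correctly under $\tilde Q_n$. These are exactly the computations I expect to be delicate, so I would double-check them against the $n=3$ and $n=4$ cases numerically.

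As an independent check on the constants $2-j_n$ and $1+2j_{n-1}$, I would compute the generating polynomial $f(x):=\sum_{k=0}^{n-1}j_kx^k$ and note that $(1-x-2x^2)f(x)=2-x-j_nx^n-2j_{n-1}x^{n+1}$; since $1-x-2x^2=(1+x)(1-2x)$ and $\omega^{jn}=1$, this gives $f(\omega^j)=\frac{(2-j_n)-(1+2j_{n-1})\omega^j}{(1+\omega^j)(1-2\omega^j)}$, so that $\det(\mathbb{j}_n)=\prod_{j=0}^{n-1}f(\omega^j)$ confirms that precisely these two constants govern the answer (with a separate evaluation at $\omega^j=-1$ when $n$ is even, where the denominator vanishes). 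This eigenvalue route both validates \eqref{60} and speaks to the ``Question: How about eigenvalues?'' raised in the introduction.
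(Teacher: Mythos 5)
Your overall strategy is exactly the paper's: triangularize $\mathbb{j}_n$ by a left factor encoding the common recurrence $x^2-x-2$ and a right factor of the same shape as $Q_n$ with ratio $\frac{1+2j_{n-1}}{2-j_n}$, then divide out the determinants of the factors via the same mod-$4$ sign analysis. But one step fails as written: the claim that ``the very same left factor $P_n$ from \eqref{5} applies.'' The interior rows of $P_n$ do carry over, since they implement the three-term recurrence shared by $\{J_k\}$ and $\{j_k\}$; the second row does not. Row $2$ of $P_n$ is $(-1,0,\ldots,0,1)$, so the $(2,1)$ entry of $P_n\mathbb{j}_n$ is $j_1-j_0=-1\neq 0$ (for $\mathbb{J}_n$ it was $J_2-J_1=0$), and since the first column of your $\tilde Q_n$ is $e_1$, this entry survives in $P_n\mathbb{j}_n\tilde Q_n$: the product is not upper triangular and the reading-off of the determinant breaks down. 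This is exactly why the paper replaces $P_n$ by $K_n$ in \eqref{50}, which is identical except that its $(2,1)$ entry is $-\frac12$, tailored to $j_0=2$ so that $j_1-\frac12 j_0=0$. A second bookkeeping slip sits next to this one: the $(1,1)$ entry of the triangularized matrix is $j_0=2$, not $1$, so in the paper's notation $\det(U_n)=2(1+2j_{n-1})^{n-2}y_n$. Your $\tilde g_n=(4-j_{n-1})+\sum_{k=1}^{n-2}(2j_{n-k}-j_{n-k-1})\tilde r^{\,k}$ equals $2y_n$ (reindex $k\mapsto n-k$ in the paper's $y_n$), so your final identity $\det(\mathbb{j}_n)=(2-j_n)^{n-2}\tilde g_n$ is in fact the correct one --- but only because the factor $2$ you dropped from the $(1,1)$ entry is silently absorbed by calling $\tilde g_n$ ``the $(2,2)$ entry,'' which it is not (it is twice that entry). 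To your credit, you flagged precisely this boundary bookkeeping as delicate and proposed verifying against $n=3,4$; that check would have exposed both issues, and the repair ($-1\mapsto-\frac12$ in the $(2,1)$ slot, plus the factor of $2$) is local, after which your argument coincides with the paper's proof of Theorem~\ref{t2}.

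Your closing generating-function check is correct and worth keeping: the identity $(1-x-2x^2)\sum_{k=0}^{n-1}j_kx^k=2-x-j_nx^n-2j_{n-1}x^{n+1}$ holds, it confirms that $2-j_n$ and $1+2j_{n-1}$ are the governing constants, and --- handled separately at $\omega^j=-1$ when $n$ is even, where the factor $1+x$ of the denominator vanishes --- it could be upgraded from a consistency check to an independent proof of \eqref{60} via $\det(\mathbb{j}_n)=\prod_{j=0}^{n-1}f(\omega^j)$. The paper does not take this route for the determinant (it uses the analogous Binet-type sum $h(\omega^k)$ only to prove invertibility), so this part of your proposal is a genuine, and arguably cleaner, alternative.
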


\begin{proof}
Since $\det (\mathbb{j}_{3})=104,$ {$\mathbb{j}_{3}$} satisfies (\ref{60}). For $n>3,$ we select the matrices $K_{n}$ and $M_{n}$ so that when
we multiply $\mathbb{j}_{n}$ with $K_{n}$ on the left and $M_{n}$ on the right we obtain a special upper triangular matrix
that have nonzero entries only on the first two rows, main diagonal and super diagonal:%
\begin{equation}
K_{n} {:=}\begin{bmatrix}
1 & 0 & 0 & 0 & \ldots & 0 & 0 \\
-\frac{1}{2} & 0 & 0 & 0 & \ldots & 0 & 1 \\
-2 & 0 & 0 & 0 & \ldots & 1 & -1 \\
0 & 0 & 0 & 0 & \ldots & -1 & -2 \\
\vdots & \vdots & \vdots & \vdots & \ddots & \vdots & \vdots \\
0 & 0 & 1 & -1 & \ldots & 0 & 0 \\
0 & 1 & -1 & -2 & \ldots & 0 & 0%
\end{bmatrix}  \label{50}
\end{equation}%
and%
\begin{equation*}
M_{n}{:=}\begin{bmatrix}
1 & 0 & 0 & \ldots & 0 & 0 \\
0 & \left( \frac{1+2j_{n-1}}{2-j_{n}}\right) ^{n-2} & 0 & \ldots & 0 & 0 \\
0 & \left( \frac{1+2j_{n-1}}{2-j_{n}}\right) ^{n-3} & 0 & \ldots & 0 & -1 \\
0 & \left( \frac{1+2j_{n-1}}{2-j_{n}}\right) ^{n-4} & 0 & \ldots & -1 & 0 \\
\vdots & \vdots & \vdots &  & \vdots & \vdots \\
0 & \left( \frac{1+2j_{n-1}}{2-j_{n}}\right) & 0 & \ldots & 0 & 0 \\
0 & 1 & -1 & \ldots & 0 & 0%
\end{bmatrix}.
\end{equation*}%
We have%
\begin{eqnarray*}
U_{n} &=&K_{n}\mathbb{j}_{n}M_{n} \\
&=&\begin{bmatrix}
2 & y_{n}^{\prime } & -j_{n-1} & -j_{n-2} & \ldots & -j_{3} & -j_{2} \\
& y_{n} & \frac{1}{2}j_{n-1}-j_{0} & \frac{1}{2}j_{n-2}-j_{n-1} & \ldots &
\frac{1}{2}j_{3}-j_{4} & \frac{1}{2}j_{2}-j_{3} \\
&  & 1+2j_{n-1} & j_{n}-2 &  &  &  \\
&  &  & 1+2j_{n-1} &  &  &  \\
&  &  &  &  &  & 0 \\
&  &  &  &  &  &  \\
&  &  &  &  & j_{n}-2 &  \\
&  &  &  &  &  &  \\
&  &  &  &  & 1+2j_{n-1} & j_{n}-2 \\
&  &  &  &  &  & 1+2j_{n-1}%
\end{bmatrix}
\end{eqnarray*}%
{and}  $U_{n}$ is upper triangular, where
\begin{eqnarray*}
y_{n}&{:=}&\frac{1}{2}\left[ (4-j_{n-1})+\sum_{k=2}^{n-1}\left(
2j_{k}-j_{k-1}\right) \left( \frac{1+2j_{n-1}}{2-j_{n}}\right) ^{n-k}\right]{,}\\
y_{n}^{\prime }&{:=}&\sum_{k=1}^{n-1}j_{k}\left( \frac{1+2j_{n-1}}{2-j_{n}%
}\right) ^{n-k-1}.
\end{eqnarray*}%
Then we obtain%
\begin{equation*}
\det (U_{n})=\det (K_{n})\det (\mathbb{j}_{n})\det
(M_{n})=2(1+2j_{n-1})^{n-2}y_{n}.
\end{equation*}%
Since%
\begin{equation*}
\det (K_{n})=
\begin{cases}
1, & n\equiv 1\mbox{ or 2 } (\mbox{mod}4)\\
-1, & n\equiv 0\mbox{ or } 3\ (\mbox{mod}4){,}%
\end{cases}
\end{equation*}%
and%
\begin{equation*}
\det (M_{n})=
\begin{cases}
\ \ \left( \frac{1+2j_{n-1}}{2-j_{n}}\right) ^{n-2},&  n\equiv 1\mbox{ or 2 }
(\mbox{mod}4) \\
-\left( \frac{1+2j_{n-1}}{2-j_{n}}\right) ^{n-2},& n\equiv 0\mbox{ or 3 }  (\mbox
{mod}4){,}%
\end{cases}
\end{equation*}%
for all $n>3,$%
\begin{equation*}
\det (K_{n})\det (M_{n})=\left( \frac{1+2j_{n-1}}{2-j_{n}}\right) ^{n-2}
\end{equation*}%
{and} we have \eqref{60}.
\end{proof}

\section{ Inverses of {$\mathbb{J}_{n}$ and $\mathbb{j}_{n}$}}

{We will use the well-known fact that the inverse of a nonsingular circulant
matrix is also circulant [14, p.84] [12, p.33], [4, p.90-91].} 

\begin{theorem}
{The matrix} $\mathbb{J}_{n}=\mbox{circ}(J_{1},J_{2},\ldots ,J_{n})$ is
invertible {when} $n\geq 3$.
\end{theorem}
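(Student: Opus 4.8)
The plan is to show that the determinant is nonzero by computing all $n$ eigenvalues of the circulant matrix $\mathbb{J}_n = \mbox{circ}(J_1, J_2, \ldots, J_n)$ explicitly and verifying that none of them vanishes for $n \geq 3$. By the eigenvalue formula recalled in the introduction, $\lambda_j = \sum_{k=0}^{n-1} J_{k+1}\omega^{jk}$ for $j = 0, 1, \ldots, n-1$, where $\omega = \exp(2\pi i/n)$. The key idea is that the Binet formula \eqref{1} turns each $\lambda_j$ into a combination of two geometric series whose ratios are $2\omega^j$ and $-\omega^j$, both of which can be summed in closed form.

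First I would substitute $J_{k+1} = \frac{1}{3}\left(2\cdot 2^k + (-1)^k\right)$ and split
\[
\lambda_j = \frac{1}{3}\left[2\sum_{k=0}^{n-1}(2\omega^j)^k + \sum_{k=0}^{n-1}(-\omega^j)^k\right].
\]
Summing the two geometric progressions and using $\omega^{jn} = 1$ (so that $(2\omega^j)^n = 2^n$ and $(-\omega^j)^n = (-1)^n$) collapses the powers of $\omega$ in the numerators, leaving
\[
\lambda_j = \frac{1}{3}\left[\frac{2(2^n - 1)}{2\omega^j - 1} + \frac{1 - (-1)^n}{\omega^j + 1}\right],
\]
valid whenever $\omega^j \neq -1$. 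Note that the denominators $2\omega^j - 1$ and $\omega^j + 1$ cannot vanish on the unit circle except at $\omega^j = -1$, which occurs only when $n$ is even and $j = n/2$.

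Next I would dispose of the generic eigenvalues. When $n$ is even the second term drops out and $\lambda_j = \frac{2(2^n - 1)}{3(2\omega^j - 1)} \neq 0$ since $2^n \neq 1$. When $n$ is odd the bracket combines to $\frac{2}{3}\cdot\frac{(2^n + 1)\omega^j + (2^n - 2)}{(2\omega^j - 1)(\omega^j + 1)}$, and a vanishing numerator would force $\omega^j = -\frac{2^n - 2}{2^n + 1}$, a real number of modulus strictly less than $1$, contradicting $|\omega^j| = 1$; hence $\lambda_j \neq 0$. The only remaining case is the exceptional eigenvalue at $\omega^j = -1$ (the even case $j = n/2$), where summing the series directly gives $\lambda_{n/2} = \frac{1}{9}\left(3n + 2 - 2^{n+1}\right)$. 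This is nonzero precisely because $2^{n+1} > 3n + 2$ for every even $n \geq 4$; the equality $2^{n+1} = 3n + 2$ holds only at $n = 2$, which is exactly why $\mathbb{J}_2$ is singular and why the hypothesis $n \geq 3$ is needed.

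Finally, since $\det(\mathbb{J}_n) = \prod_{j=0}^{n-1}\lambda_j \neq 0$, the matrix $\mathbb{J}_n$ is invertible. The main obstacle I anticipate is bookkeeping rather than conceptual: one must carefully isolate the degenerate case $\omega^j = -1$ where the geometric-series formula breaks down, and verify the elementary inequality $2^{n+1} > 3n + 2$ for $n \geq 4$, since this is the single place where the cutoff $n \geq 3$ actually matters. As an alternative one could instead invoke the closed form of Theorem \ref{t1} directly, but extracting the sign of that alternating sum appears more delicate than the eigenvalue count above, so I would prefer the route through the eigenvalues.
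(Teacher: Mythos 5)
Your proof is correct, and it follows the same basic route as the paper---apply the circulant eigenvalue formula, substitute the Binet formula \eqref{1}, and sum the resulting geometric series with ratios $2\omega^j$ and $-\omega^j$---but your execution is more careful at exactly the point where the paper's own argument has a gap. The paper writes $g(\omega^k)=\frac{1-J_{n+1}-2J_n\omega^k}{1-\omega^k-2\omega^{2k}}$ under the parenthetical assumption $1-\alpha\omega^k,\,1-\beta\omega^k\neq 0$, and then rules out a zero by asserting that $u=-1$ is not a root of $1-J_{n+1}-2J_nu=0$. That assertion is false for even $n$: since $J_{n+1}-2J_n=(-1)^n$, one has $1-J_{n+1}+2J_n=1-(-1)^n=0$ when $n$ is even, so at $\omega^{n/2}=-1$ both the numerator and the denominator of the paper's rational expression vanish and the formula is a $0/0$ indeterminate---this is precisely the degenerate point where the geometric series with ratio $-\omega^k$ cannot be summed in closed form. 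Your proposal isolates exactly this case and evaluates the eigenvalue directly, $\lambda_{n/2}=\frac{1}{9}\left(3n+2-2^{n+1}\right)$ (for $n=4$ this gives $-2=J_1-J_2+J_3-J_4$, as it should), which is nonzero for even $n\geq 4$ because $2^{n+1}>3n+2$; your remark that equality at $n=2$ is what makes $\mathbb{J}_2=\mbox{circ}(1,1)$ singular is a nice sanity check on the hypothesis $n\geq 3$. Two further small gains of your version: the closed forms cover $j=0$ and the cases $n=3,4$ uniformly, whereas the paper verifies $\det(\mathbb{J}_3)=20$ and $\det(\mathbb{J}_4)=-400$ separately and restricts the eigenvalue argument to $n\geq 5$; and your modulus argument $\left|\frac{2^n-2}{2^n+1}\right|<1$ for odd $n$ is the cleaner counterpart of the paper's reality argument via \eqref{omega}. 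In short: same method, but your handling of the exceptional eigenvalue at $\omega^j=-1$ repairs a genuine defect in the paper's proof rather than merely reproducing it.
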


\begin{proof}
From Theorem \ref{t1},  $\det (\mathbb{J}_{3})=20\neq 0$ and $\det (\mathbb{J%
}_{4})=-400\neq 0.$ Then $\mathbb{J}_{3}$ and $\mathbb{J}_{4}$ are {invertible}. Let $n\geq 5.$ {The} Binet formula for Jacobsthal numbers
{gives} $J_{n}=\frac{\alpha ^{n}-\beta ^{n}}{3}${,} where $\alpha +\beta
=1,\alpha \beta =-2$ and $\alpha -\beta =3.$ Then we have%
\begin{eqnarray*}
g(\omega ^{k}) &=&\sum_{r=1}^{n}J_{r}\omega
^{kr-k}=\sum_{r=1}^{n}\left( \frac{\alpha ^{r}-\beta ^{r}}{3}\right)
\omega ^{kr-k}=\frac{1}{3}\sum_{r=1}^{n}\left( \alpha ^{r}-\beta
^{r}\right) \omega ^{kr-k} \\
&=&\frac{1}{3}\left[ \frac{\alpha (1-\alpha ^{n})}{1-\alpha \omega ^{k}}-%
\frac{\beta (1-\beta ^{n})}{1-\beta \omega ^{k}}\right] ,\ \ (1-\alpha
\omega ^{k},1-\beta \omega ^{k}\neq 0) \\
&=&\frac{1}{3}\left( \frac{(\alpha -\beta )-(\alpha ^{n+1}-\beta
^{n+1})+\alpha \beta \omega ^{k}(\alpha ^{n}-\beta ^{n})}{1-\alpha \omega
^{k}-\beta \omega ^{k}+\alpha \beta \omega ^{2k}}\right) \\
&=&\frac{1-J_{n+1}-2J_{n}\omega ^{k}}{1-\omega ^{k}-2\omega ^{2k}}{,}\qquad k=1,2,\ldots ,n-1.
\end{eqnarray*}%
 {If there existed} $\omega ^{k}$ $(k=1,2,\ldots ,n-1)$ such that $%
g(\omega ^{k})=0${, then we would have} $1-J_{n+1}-2J_{n}\omega ^{k}=0$ for $1-\omega ^{k}-2\omega ^{2k}\neq 0.$ Hence $\omega
^{k}=\frac{1-J_{n+1}}{2J_{n}}.$ It
is well known that%
\begin{equation}\label{omega}
\omega ^{k}=\exp \left( \frac{2k\pi i}{n}\right) =\cos \left( \frac{2k\pi }{n%
}\right) +i\sin \left( \frac{2k\pi }{n}\right)
\end{equation}
where $i:=\sqrt {-1}$. Since $\omega ^{k}=\frac{1-J_{n+1}}{2J_{n}}$
is a real number,  $\sin \left( \frac{2k\pi }{n}\right) =0$
{so that}  $\omega ^{k}=-1$ for $0<\frac{2k\pi }{n}<2\pi .$ However $%
u=-1$ is  not a root of the equation $1-J_{n+1}-2J_{n}u=0$ $(n\geq 5)$,
 {a} contradiction{,} i.e.{,} $g(\omega ^{k})\neq 0$ for any $
\omega ^{k}${, where} $k=1, 2, \dots , n-1$, $n\geq 5.$ Thus the proof is completed by
  [1, Lemma 1.1].
\end{proof}

\begin{lemma}
Let $A= (a_{ij})$ be {the} $(n-2)\times (n-2)$ matrix {defined by} 
\begin{equation*}
a_{ij}=%
\begin{cases}
2J_{n}, & i=j \\ 
J_{n+1}-1, & j=i+1 \\ 
0, & \mbox{otherwise}.%
\end{cases}%
\end{equation*}%
{Then} $A^{-1}=(a_{ij}^{^{\prime }})$ is {given by} 
\begin{equation*}
a_{ij}^{\prime }{:=}%
\begin{cases}
\frac{(1-J_{n+1})^{j-i}}{(2J_{n})^{j-i+1}}, & j\geq i \\ 
0, & \mbox{otherwise}.%
\end{cases}%
\end{equation*}
\end{lemma}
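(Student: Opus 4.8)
The plan is to verify directly that the candidate matrix is a two-sided inverse of $A$. Since $A$ is square, it suffices to check the single identity $AA^{-1}=I$. The key structural fact to exploit is that $A$ is upper bidiagonal, so each row $i$ of $A$ has at most two nonzero entries, $a_{ii}=2J_{n}$ and $a_{i,i+1}=J_{n+1}-1$, the latter being absent in the last row $i=n-2$. Writing $B=(a_{ij}^{\prime})$ for the proposed inverse, the $(i,j)$ entry of the product collapses to at most two summands,
\[
(AB)_{ij}=2J_{n}\,a_{ij}^{\prime}+(J_{n+1}-1)\,a_{i+1,j}^{\prime},
\]
where the second term is understood to vanish when $i=n-2$. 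This reduction is what makes the whole computation short.

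Next I would split into the cases $j<i$, $j=i$, and $j>i$. For $j<i$ both $a_{ij}^{\prime}$ and $a_{i+1,j}^{\prime}$ are zero by definition, so $(AB)_{ij}=0$. For $j=i$ only the diagonal term survives and gives $2J_{n}\cdot\frac{1}{2J_{n}}=1$. The substantive case is $j>i$: substituting the closed forms and rewriting $1-J_{n+1}=-(J_{n+1}-1)$ reveals the common factor $(1-J_{n+1})^{j-i-1}/(2J_{n})^{j-i}$, and the surviving bracket is $(1-J_{n+1})+(J_{n+1}-1)=0$. Hence every strictly upper entry of $AB$ cancels, $AB=I$, and $B=A^{-1}$.

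The argument is essentially a verification, so I do not anticipate a genuine obstacle; the only points needing care are the boundary behaviour in the last row (where the missing superdiagonal entry of $A$ means the lone diagonal term already yields $1$) and the exponent bookkeeping when the prefactor $(J_{n+1}-1)$ is absorbed into a power of $(1-J_{n+1})$. A cleaner alternative I would mention is to write $A=2J_{n}\bigl(I+\tfrac{J_{n+1}-1}{2J_{n}}N\bigr)$, where $N$ is the nilpotent shift matrix carrying $1$'s on the superdiagonal; since $N^{n-2}=0$ the Neumann series terminates and
\[
A^{-1}=\frac{1}{2J_{n}}\sum_{m=0}^{n-3}\left(\frac{1-J_{n+1}}{2J_{n}}\right)^{m}N^{m},
\]
whose $(i,j)$ entry is precisely $a_{ij}^{\prime}$ because $N^{m}$ has $1$'s exactly where $j=i+m$. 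Either way the single prerequisite is $2J_{n}\neq0$ for $n\geq3$, which is immediate from the Binet formula $J_{n}=\tfrac{1}{3}[2^{n}-(-1)^{n}]$.
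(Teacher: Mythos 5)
Your proof is correct and follows essentially the same route as the paper: a direct entrywise verification that $AA^{-1}=I$, using the bidiagonal structure of $A$ so each product entry has at most two terms, with the diagonal giving $2J_{n}\cdot\frac{1}{2J_{n}}=1$ and the strictly upper entries cancelling via $(J_{n+1}-1)+(1-J_{n+1})=0$; you in fact handle the $j<i$ case and the last-row boundary more explicitly than the paper, which dismisses them with ``similar for $j<i$.'' Your closing remark expressing $A^{-1}$ through the terminating Neumann series $\frac{1}{2J_{n}}\sum_{m=0}^{n-3}\bigl(\frac{1-J_{n+1}}{2J_{n}}\bigr)^{m}N^{m}$ is a clean alternative not in the paper, but the core argument is the same verification.
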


\begin{proof}
Let $B=(b_{ij})=AA^{-1}.$ Clearly 
 $b_{ij}=\sum_{k=1}^{n-2}a_{ik}a_{kj}^{^{\prime }}$.
{When} $i=j,$ we have%
\begin{equation*}
b_{ii}=2J_{n}{\cdot}\frac{1}{2J_{n}}=1.
\end{equation*}%
If {$j> i$,} then%
\begin{eqnarray*}
b_{ij} &=&\sum_{k=1}^{n-2}a_{ik}a_{kj}^{^{\prime
}}=a_{i,i+1}a_{i+1,j}^{^{\prime }}+a_{ii}a_{ij}^{^{\prime }} \\
&=&(J_{n+1}-1)\frac{(1-J_{n+1})^{j-i-1}}{(2J_{n})^{j-i}}+2J_{n}\frac{%
(1-J_{n+1})^{j-i}}{(2J_{n})^{j-i+1}}=0;
\end{eqnarray*}%
similar for $j<i$. Thus $AA^{-1}=I_{n-2}$. 
\end{proof}

\begin{theorem}
\bigskip Let the matrix $\mathbb{J}_{n}$ be $\mathbb{J}_{n}:=\mbox{circ}%
(J_{1},J_{2},\ldots ,J_{n})$ ($n\geq 3$). Then the inverse of the matrix $%
\mathbb{J}_{n}$ is%
\begin{equation*}
\mathbb{J}_{n}^{-1}=circ(m_{1},m_{2},\ldots ,m_{n})
\end{equation*}%
where%
\begin{eqnarray*}
m_{1} &=&\frac{J_{n+1}+(1-2J_{n-1})g_{n}-1}{2g_{n}J_{n}^{2}} \\
m_{2} &=&\frac{g_{n}-1}{J_{n}g_{n}} \\
m_{3} &=&\frac{1}{g_{n}}\left[ (1-J_{n}-g_{n})\frac{(1-J_{n+1})^{n-3}}{%
(2J_{n})^{n-2}}+2\dsum\limits_{k=2}^{n-2}J_{n-k}\frac{(1-J_{n+1})^{n-k-2}}{%
(2J_{n})^{n-k-1}}\right]
\end{eqnarray*}%
\begin{eqnarray*}
m_{4} &=&\frac{1}{g_{n}}\left( \frac{%
[(1-J_{n}-g_{n})(J_{n+2}-1)-4J_{n}J_{n-2}](1-J_{n+1})^{n-4}}{(2J_{n})^{n-2}}%
\right. \\
&&\left. +4\dsum\limits_{k=1}^{n-4}J_{k}\frac{(1-J_{n+1})^{k-1}}{(2J_{n})^{k}%
}\right) \\
m_{i} &=&\frac{1}{g_{n}}\frac{(1-J_{n+1})^{n-i}}{(2J_{n})^{n-i+1}}\left\{ 
\begin{array}{c}
\frac{(1-J_{n}-g_{n})(2^{n+2}-4)}{(2J_{n})^{2}}-2\left( J_{n-1}+\frac{%
J_{n-2}(1-J_{n+1})}{J_{n}}\right) ,\ n\ is\ odd \\ 
-2\left( J_{n-1}+\frac{J_{n-2}(1-J_{n+1})}{J_{n}}\right) ,\ \ \ \ \ \ \ \ \
\ \ \ \ \ \ \ \ \ \ \ \ \ \ \ \ \ \ n\ is\ even%
\end{array}%
\right.
\end{eqnarray*}%
for $g_{n}=1-J_{n}+2\tsum\nolimits_{k=1}^{n-2}J_{n-k-1}\left( \frac{2J_{n}}{%
1-J_{n+1}}\right) ^{k}$ and $i=5,6,\ldots ,n.$
\end{theorem}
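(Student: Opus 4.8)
The plan is to invert the triangularizing factorization from Theorem~\ref{t1}. Since $S_n = P_n \mathbb{J}_n Q_n$ with $P_n$ and $Q_n$ nonsingular (their determinants were computed there to be $\pm 1$ and $\pm(2J_n/(1-J_{n+1}))^{n-2}$), we have $\mathbb{J}_n^{-1} = Q_n S_n^{-1} P_n$. Because $\mathbb{J}_n^{-1}$ is already known to be circulant, it is determined by one column, so it suffices to compute the first column $\mathbb{J}_n^{-1} e_1 = Q_n S_n^{-1}(P_n e_1)$ and then read off the generators through the circulant correspondence, namely that the first column of $\mbox{circ}(m_1,\ldots,m_n)$ equals $(m_1, m_n, m_{n-1}, \ldots, m_2)^T$.

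First I would exploit the sparsity of $P_n e_1$, which is just the first column $(1,-1,-2,0,\ldots,0)^T$ of $P_n$. Solving $S_n y = P_n e_1$ by back-substitution is then immediate: the bottom $(n-2)\times(n-2)$ block of $S_n$ is exactly the bidiagonal matrix $A$ of the Lemma, and since the right-hand side vanishes below its third entry, back-substitution forces $y_4 = \cdots = y_n = 0$ and leaves the three nonzero values $y_3 = -1/J_n$, $y_2 = -1/(g_n J_n)$, $y_1 = f_n/(g_n J_n)$, where the entries $2J_n$, $J_{n+1}-1$, $J_n-1$, $-J_n$ and the scalars $f_n,g_n$ in the first two rows of $S_n$ close the recursion. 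The Lemma is what guarantees and organizes this back-substitution through the bidiagonal block.

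Next I would apply $Q_n$ to the sparse vector $y$. Only the first three columns of $Q_n$ are active: column~1 is $e_1$, column~2 carries the geometric progression $\left(\frac{2J_n}{1-J_{n+1}}\right)^{n-j}$ down the rows, and column~3 contributes a single anti-diagonal $-1$ in row $n$. Hence the first column of $\mathbb{J}_n^{-1}$ is $y_1$ in position~$1$, the pure geometric term $y_2\left(\frac{2J_n}{1-J_{n+1}}\right)^{n-j}$ in each position $2\le j\le n-1$, and $y_2 - y_3$ in position~$n$. Reading these back through the circulant correspondence yields $m_1 = f_n/(g_n J_n)$, $m_2 = (g_n-1)/(g_n J_n)$, and $m_i = -\frac{1}{g_n J_n}\left(\frac{2J_n}{1-J_{n+1}}\right)^{i-2}$ for $3\le i\le n$.

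The last step, and the main obstacle, is reconciling these compact expressions with the stated forms. I would use the Binet formula \eqref{1} together with $J_{n+1}=J_n+2J_{n-1}$ to rewrite $f_n$ and $g_n$: the identity $f_n = \frac{J_{n+1} + (1-2J_{n-1})g_n - 1}{2J_n}$ turns $m_1$ into the displayed quotient, while re-expanding the single geometric term through the definition of $g_n$ and substituting $J_n = (2^n-(-1)^n)/3$ recovers the summation forms for $m_3, m_4$ and the parity-dependent bracket for $m_i$ with $i\ge 5$; the odd/even split is purely an artifact of the factor $(-1)^n$ in the Binet formula for $J_n$. This algebraic bookkeeping---tracking the powers of $(1-J_{n+1})/(2J_n)$ and collapsing the resulting Jacobsthal sums---is where the bulk of the effort and the greatest risk of computational error lie.
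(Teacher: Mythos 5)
Your proposal is correct, and it takes a genuinely different route from the paper. The paper does not stop at $S_n=P_n\mathbb{J}_nQ_n$: it introduces a further matrix $R_n$ so that $P_n\mathbb{J}_nQ_nR_n=G\oplus A$ with $G=\mbox{diag}(1,g_n)$, proves a separate lemma giving the bidiagonal block's inverse $A^{-1}$ in closed form, and then extracts the \emph{last row} of $\mathbb{J}_n^{-1}=T_n(G^{-1}\oplus A^{-1})P_n$, $T_n=Q_nR_n$, whose entries are the generators in the order $(m_2,m_3,\ldots,m_n,m_1)$; because the last row of $T_n$ is dense, the paper genuinely needs all of $A^{-1}$. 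You instead invert the bare factorization, $\mathbb{J}_n^{-1}=Q_nS_n^{-1}P_n$, and exploit the fact that $P_ne_1=(1,-1,-2,0,\ldots,0)^T$ has only three nonzero entries, so the solve $S_ny=P_ne_1$ collapses to three back-substitution steps; your values $y_3=-1/J_n$, $y_2=-1/(g_nJ_n)$, $y_1=f_n/(g_nJ_n)$ are correct, and note that this makes the paper's Lemma on $A^{-1}$ superfluous for your argument (only $2J_n\neq 0$ is used), contrary to your remark that the Lemma ``organizes'' the back-substitution. What your route buys is the compact closed form $m_1=f_n/(g_nJ_n)$, $m_2=(g_n-1)/(g_nJ_n)$, $m_i=-\frac{1}{g_nJ_n}\bigl(\frac{2J_n}{1-J_{n+1}}\bigr)^{i-2}$ for $3\le i\le n$, which I verified numerically against the theorem's stated formulas at $n=5$ and $n=6$ (they agree, as does your identity $2J_nf_n=J_{n+1}-1+(1-2J_{n-1})g_n$); the paper's route, by contrast, produces the stated sum forms directly but at the cost of the extra matrices $R_n$, $T_n$ and the lemma.

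Two caveats. First, your reading of $Q_n$ --- the $-1$ of column $3$ sitting in row $n$, so the last row is $(0,1,-1,0,\ldots,0)$ --- contradicts the paper's printed $Q_n$, whose last row is $(0,1,0,\ldots,0)$ with the column-$3$ entry $-1$ placed in row $n-1$. Your version is the one for which $P_n\mathbb{J}_nQ_n=S_n$ actually holds (a direct check at $n=5$ confirms it, and it matches the structure of the printed $M_n$ in Theorem \ref{t2}), so you have silently corrected a typo in the paper rather than made an error, but a careful writeup should say so explicitly, since with the printed $Q_n$ your position-$n$ entry would be $y_2$ and you would get the wrong $m_2$. Second, the final reconciliation of your single geometric formula with the stated sum forms of $m_3,m_4$ and the parity-split expression for $m_i$, $i\ge 5$, is only sketched; it is routine (for even $n$ one has $J_{n+1}-1=2J_n$, so your ratio is $-1$ and the odd/even split is indeed the $(-1)^n$ of the Binet formula, as you say), but it is exactly the kind of bookkeeping where the paper itself supplies no detail either, so carrying out at least the $m_3$ case in full would be needed for a complete proof.
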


\begin{proof}
Let%
\begin{equation*}
R_{n}=\left[
\begin{array}{cccccc}
1 & -f_{n} & \frac{f_{n}}{g_{n}}(J_{n}-1)+J_{n} & J_{n-1}-2\frac{f_{n}}{g_{n}%
}J_{n-2} & \ldots & J_{3}-2\frac{f_{n}}{g_{n}}J_{2} \\
0 & 1 & -\frac{J_{n}-1}{g_{n}} & \frac{2J_{n-2}}{g_{n}} & \ldots & \frac{%
2J_{2}}{g_{n}} \\
0 & 0 & 1 & 0 & \ldots & 0 \\
0 & 0 & 0 & 1 & \ldots & 0 \\
\vdots & \vdots & \vdots & \vdots & \ddots & \vdots \\
0 & 0 & 0 & 0 & \ldots & 0 \\
0 & 0 & 0 & 0 & \ldots & 1%
\end{array}%
\right]
\end{equation*}%
and $G=diag(1,g_{n})$ where $f_{n}=\tsum\nolimits_{k=1}^{n-1}J_{k+1}\left(
\frac{2J_{n}}{1-J_{n+1}}\right) ^{n-k-1}$ and $g_{n}=1-J_{n}+2\tsum%
\nolimits_{k=1}^{n-2}J_{n-k-1}\left( \frac{2J_{n}}{1-J_{n+1}}\right) ^{k}.$
Then we can write%
\begin{equation*}
P_{n}\mathbb{J}_{n}Q_{n}R_{n}=G\oplus A
\end{equation*}%
where $G\oplus A$ is the direct sum of the matrices $G$ and $A$. Let $%
T_{n}=Q_{n}R_{n}.$ Then we have%
\begin{equation*}
\mathbb{J}_{n}^{-1}=T_{n}(G^{-1}\oplus A^{-1})P_{n}.
\end{equation*}

Since the matrix $\mathbb{J}_{n}$ is circulant, its inverse is circulant
from Lemma 1.1 [1, p. 9791]. Let%
\begin{equation*}
\mathbb{J}_{n}^{-1}=circ(m_{1},m_{2},\ldots ,m_{n}).
\end{equation*}%
Since the last row of the matrix $T_{n}$ is
\begin{equation*}
\left( 0,1,\frac{1-J_{n}}{g_{n}}-1,\frac{2J_{n-2}}{g_{n}},\frac{2J_{n-3}}{%
g_{n}},\ldots ,\frac{2J_{3}}{g_{n}},\frac{2J_{2}}{g_{n}}\right) ,
\end{equation*}%
the last row entries of the matrix $\mathbb{J}_{n}^{-1}$ are%
\begin{eqnarray*}
m_{2} &=&\frac{g_{n}-1}{J_{n}g_{n}} \\
m_{3} &=&\frac{1}{g_{n}}\left( (1-J_{n}-g_{n})\frac{(1-J_{n+1})^{n-3}}{%
(2J_{n})^{n-2}}+2\dsum\limits_{k=2}^{n-2}J_{n-k}\frac{(1-J_{n+1})^{n-k-2}}{%
(2J_{n})^{n-k-1}}\right) \\
m_{4} &=&\frac{1}{g_{n}}\left( \frac{%
[(1-J_{n}-g_{n})(J_{n+2}-1)-4J_{n}J_{n-2}](1-J_{n+1})^{n-4}}{(2J_{n})^{n-2}}%
\right. \\
&&\left. +4\dsum\limits_{k=1}^{n-4}J_{k}\frac{(1-J_{n+1})^{k-1}}{(2J_{n})^{k}%
}\right)
\end{eqnarray*}%
\begin{eqnarray*}
m_{5} &=&\frac{1}{g_{n}}\left[ (1-J_{n}-g_{n})(2^{n+1}-2)(1+(-1)^{n-1})\frac{%
(1-J_{n+1})^{n-5}}{(2J_{n})^{n-2}}\right. \\
&&+\frac{2}{g_{n}}\left( \dsum\limits_{k=1}^{n-4}J_{k+3}\frac{%
(1-J_{n+1})^{k-1}}{(2J_{n})^{k}}-\dsum\limits_{k=1}^{n-3}J_{k+2}\frac{%
(1-J_{n+1})^{k-1}}{(2J_{n})^{k}}-2\dsum\limits_{k=1}^{n-2}J_{k+1}\frac{%
(1-J_{n+1})^{k-1}}{(2J_{n})^{k}}\right) \\
&&\vdots \\
m_{n} &=&\frac{1}{g_{n}}\left[ (1-J_{n}-g_{n})(2^{n+1}-2)(1+(-1)^{n-1})%
\right. \\
&&\left. +2J_{n-2}\left( -\frac{1}{2J_{n}}-2\frac{1-J_{n+1}}{(2J_{n})^{2}}%
\right) +2J_{n-3}\left( -\frac{2}{2J_{n}}\right) \right] \\
m_{1} &=&\frac{J_{n+1}+(1-2J_{n-1})g_{n}-1}{2g_{n}J_{n}^{2}}
\end{eqnarray*}%
where $g_{n}=1-J_{n}+2\tsum\nolimits_{k=1}^{n-2}J_{n-k-1}\left( \frac{2J_{n}%
}{1-J_{n+1}}\right) ^{k}.$ If we rearrange $m_{5},$ then%
\begin{eqnarray*}
m_{5} &=&\frac{1}{g_{n}}\left[ (1-J_{n}-g_{n})(2^{n+1}-2)(1+(-1)^{n-1})\frac{%
(1-J_{n+1})^{n-5}}{(2J_{n})^{n-2}}\right. \\
&&+\frac{2}{g_{n}}\frac{(1-J_{n+1})^{n-4}}{(2J_{n})^{n-3}}\left[ \left(
-J_{n-1}\frac{(1-J_{n+1})^{n-4}}{(2J_{n})^{n-3}}-2J_{n-1}\frac{%
(1-J_{n+1})^{n-3}}{(2J_{n})^{n-2}}-2J_{n-2}\frac{(1-J_{n+1})^{n-4}}{%
(2J_{n})^{n-3}}\right) \right. \\
&&\left. +\dsum\limits_{k=1}^{n-4}(\underset{0}{\underbrace{%
J_{k+3}-J_{k+2}-2J_{k+1}}})\frac{(1-J_{n+1})^{k-1}}{(2J_{n})^{k}}\right] \\
&=&\frac{1}{g_{n}}\frac{(1-J_{n+1})^{n-4}}{(2J_{n})^{n-3}}\left\{
\begin{array}{c}
\frac{(1-J_{n}-g_{n})(2^{n+2}-4)}{(2J_{n})^{2}}-2\left( J_{n-1}+\frac{%
J_{n-2}(1-J_{n+1})}{J_{n}}\right) ,\ n\ is\ odd \\
-2\left( J_{n-1}+\frac{J_{n-2}(1-J_{n+1})}{J_{n}}\right) ,\ \ \ \ \ \ \ \ \
\ \ \ \ \ \ \ \ \ \ \ \ \ \ \ \ \ \ n\ is\ even.%
\end{array}%
\right.
\end{eqnarray*}%
Then%
\begin{equation*}
m_{i}=\frac{1}{g_{n}}\frac{(1-J_{n+1})^{n-i}}{(2J_{n})^{n-i+1}}\left\{
\begin{array}{c}
\frac{(1-J_{n}-g_{n})(2^{n+2}-4)}{(2J_{n})^{2}}-2\left( J_{n-1}+\frac{%
J_{n-2}(1-J_{n+1})}{J_{n}}\right) ,\ n\ is\ odd \\
-2\left( J_{n-1}+\frac{J_{n-2}(1-J_{n+1})}{J_{n}}\right) ,\ \ \ \ \ \ \ \ \
\ \ \ \ \ \ \ \ \ \ \ \ \ \ \ \ \ \ n\ is\ even.%
\end{array}%
\right.
\end{equation*}%
and $i=5,6,\ldots ,n.\ $Since the matrix $\mathbb{J}_{n}^{-1}$ is a
circulant matrix and its last row is known, the proof is completed.
\end{proof}

{A Hankel matrix $A=(a_{ij})$ is an $n\times n$ matrix such that $a_{i,j} =
a_{i-1,j+1}$. It is closely related to the Toeplitz matrix in the sense that
a Hankel matrix is an upside-down Toeplitz matrix.}

\begin{corollary}
Let the matrix $P_{n}$ be as in (\ref{5}). Then 
\begin{equation*}
P_{n}^{-1}=\left[ 
\begin{array}{r}
H_{1} \\ 
H_{2}%
\end{array}%
\right]{,}
\end{equation*}%
where $H_{1}{:=}[1,0,\ldots ,0]$ and $H_{2}$ is an $(n-1)\times n$ Hankel
type {in which the} first row {is} $\left[ J_{n}, J_{n-1},\dots , J_{1}%
\right] $ and {the} last column {is} $\left[ 1, 0,\ldots , 0\right] ^{T}$.
\end{corollary}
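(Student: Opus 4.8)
The plan is to verify the assertion directly by checking that the proposed matrix is a right inverse of $P_n$; since $P_n$ is square (and indeed $\det P_n=\pm1$ by the computation in Theorem~\ref{t1}), a right inverse is the two-sided inverse, so this suffices. Write $H$ for the claimed matrix $\left[\begin{array}{c} H_1 \\ H_2 \end{array}\right]$. It is convenient to record the entries of both factors explicitly. First, reading off (\ref{5}), row $1$ of $P_n$ is $e_1^{T}$; the first column carries $P_n[2,1]=-1$ and $P_n[3,1]=-2$; and for each $r$ with $2\le r\le n$ the triple $(1,-1,-2)$ occupies columns $n-r+2,\,n-r+3,\,n-r+4$, those falling outside $\{2,\dots,n\}$ being simply absent (this accounts for the truncation visible in rows $2$ and $3$). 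Second, since $H_2$ is Hankel with first row $[J_n,\dots,J_1]$ and last column $[1,0,\dots,0]^{T}$, its anti-diagonals are constant; setting $h_s:=J_{n+2-s}$ for $2\le s\le n+1$ and $h_s:=0$ for $s\ge n+2$ (note $h_{n+2}=J_0=0$ is consistent), one has $H[1,c]=\delta_{1c}$ and $H[k,c]=h_{k+c-1}$ for $2\le k\le n$.

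With these formulas the product $P_nH$ can be computed a row at a time. For a generic row $r$ with $4\le r\le n$ all three active columns lie in range, so
\[
(P_nH)[r,c]=H[n-r+2,c]-H[n-r+3,c]-2H[n-r+4,c]=h_m-h_{m+1}-2h_{m+2},\qquad m:=n-r+c+1.
\]
When $m\le n$ this equals $J_{n+2-m}-J_{n+1-m}-2J_{n-m}=0$ by the Jacobsthal recurrence $J_{a+2}=J_{a+1}+2J_a$ (valid down to $a=0$ since $J_2=J_1+2J_0$); when $m\ge n+2$ all three terms vanish; and exactly on the diagonal $c=r$, where $m=n+1$, one gets $h_{n+1}-h_{n+2}-2h_{n+3}=J_1-J_0-0=1$. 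Hence row $r$ of $P_nH$ is $e_r^{T}$ for every $4\le r\le n$, the off-diagonal cancellation being precisely the recurrence and the diagonal $1$ coming from the boundary values $J_1=1$, $J_0=0$.

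It then remains to treat the three boundary rows by hand. Row $1$ is immediate since $P_n[1,\cdot]=e_1^{T}$. For row $2$ one has $(P_nH)[2,c]=-H[1,c]+H[n,c]=-\delta_{1c}+h_{n+c-1}$, which is $0$ for $c=1$, then $1$ for $c=2$, and $0$ afterwards; for row $3$, $(P_nH)[3,c]=-2\delta_{1c}+h_{n+c-2}-h_{n+c-1}$, which again collapses to $e_3^{T}$ after inserting $J_3=3,\,J_2=J_1=1,\,J_0=0$. Assembling the rows gives $P_nH=I_n$, proving the corollary. The main obstacle is not any single computation but the bookkeeping at the boundary: one must read the nonzero pattern of $P_n$ correctly in the wrapped/truncated top rows, and handle the edge indices $s=n+1,n+2$ where the closed form $h_s=J_{n+2-s}$ meets the hard cutoff $h_s=0$; away from these edges every entry is annihilated uniformly by the single recurrence relation.
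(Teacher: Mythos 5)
Your proof is correct, and it takes a genuinely different route from the paper. The paper's entire proof is one sentence: $P_n^{-1}$ is ``computed easily by applying elementary row operations to the augmented matrix $[P_n:I_n]$'' --- i.e., Gauss--Jordan elimination, with no details recorded and the verification left entirely to the reader. You instead verify the stated closed form directly by checking $P_nH=I_n$ entrywise. I checked your bookkeeping and it is accurate: row $r$ of $P_n$ for $4\le r\le n$ does carry the triple $(1,-1,-2)$ in columns $n-r+2$, $n-r+3$, $n-r+4$, with the extra entries $-1$, $-2$ in column $1$ of rows $2$, $3$; your encoding $H[k,c]=h_{k+c-1}$ with $h_s=J_{n+2-s}$ for $2\le s\le n+1$ and $h_s=0$ for $s\ge n+2$ is consistent at the seam because $J_0=0$; the uniform cancellation $h_m-h_{m+1}-2h_{m+2}=0$ is exactly the Jacobsthal recurrence (valid down to $J_2=J_1+2J_0$), the diagonal case is precisely $m=n+1$, i.e.\ $c=r$, giving $J_1-J_0=1$, and the boundary rows $2$ and $3$ collapse to $e_2^T$ and $e_3^T$ using $J_3=3$, $J_2=J_1=1$. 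What your route buys is a complete, self-contained proof that also explains \emph{why} the inverse is a Jacobsthal--Hankel matrix: multiplying by $P_n$ implements the three-term recurrence, which annihilates the shifted Jacobsthal sequences filling the anti-diagonals of $H_2$; the paper's route is constructive in principle but proves nothing on the page. One minor remark: your parenthetical appeal to $\det P_n=\pm 1$ is unnecessary (over a field, $P_nH=I_n$ for square matrices already forces $HP_n=I_n$) and in fact the determinant computation in Theorem \ref{t1} is only carried out for $n>3$, so it is good that your argument does not depend on it --- your hand-checked boundary rows also cover $n=3$ and $n=4$, where the generic range $4\le r\le n$ is empty or minimal.
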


\begin{proof}
The matrix $P_{n}^{-1}$ is computed easily by applying elementary row operations to the {augmented} matrix $[P_{n}:I_{n}].$
\end{proof}

\begin{theorem}
{The matrix} $\mathbb{j}_{n}=\mbox{circ}(j_{0},j_{1},\ldots ,j_{n-1})$ is
invertible when $n\geq 3$.
\end{theorem}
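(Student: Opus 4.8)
The plan is to mirror the argument just used for $\mathbb{J}_{n}$, working directly with the eigenvalues of the circulant matrix. Writing $\mathbb{j}_{n}=\mbox{circ}(j_{0},j_{1},\ldots ,j_{n-1})$, its eigenvalues are $h(\omega ^{k})=\sum_{r=0}^{n-1}j_{r}\omega ^{kr}$ for $k=0,1,\ldots ,n-1$, where $\omega =\exp (2\pi i/n)$, and by [1, Lemma 1.1] it suffices to show $h(\omega ^{k})\neq 0$ for every such $k$. First I would substitute the Binet formula $j_{r}=\alpha ^{r}+\beta ^{r}$ with $\alpha =2$ and $\beta =-1$ (the roots of $x^{2}-x-2=0$), split $h(\omega ^{k})$ into the two geometric sums $\sum_{r=0}^{n-1}(\alpha \omega ^{k})^{r}$ and $\sum_{r=0}^{n-1}(\beta \omega ^{k})^{r}$, and use $\omega ^{kn}=1$ together with $\alpha +\beta =1$, $\alpha \beta =-2$, and $\alpha ^{n}\beta +\beta ^{n}\alpha =-2j_{n-1}$ to collapse them into the closed form
\[
h(\omega ^{k})=\frac{(2-j_{n})-(1+2j_{n-1})\omega ^{k}}{1-\omega ^{k}-2\omega ^{2k}},
\]
valid whenever $1-\alpha \omega ^{k}$ and $1-\beta \omega ^{k}$ are both nonzero. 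The numerator coincides with the quantities $2-j_{n}$ and $1+2j_{n-1}$ appearing in Theorem \ref{t2}, which is a useful consistency check.

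Next I would rule out the vanishing of $h(\omega ^{k})$ for $1\leq k\leq n-1$ under the generic assumption $\omega ^{k}\neq -1$. The denominator factors as $(1-2\omega ^{k})(1+\omega ^{k})$, which is nonzero because $|\omega ^{k}|=1\neq \tfrac{1}{2}$ and $\omega ^{k}\neq -1$. The numerator can vanish only if $\omega ^{k}=(2-j_{n})/(1+2j_{n-1})$, a real number; but for $1\leq k\leq n-1$, equation \eqref{omega} shows that the only real value $\omega ^{k}$ can take is $-1$, which is excluded. Hence $h(\omega ^{k})\neq 0$ in this range. The case $k=0$ is immediate, since $h(1)=\sum_{r=0}^{n-1}j_{r}>0$.

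The main obstacle is precisely the excluded value $\omega ^{k}=-1$, which occurs exactly when $n$ is even and $k=n/2$. There the closed form is unusable: since $\beta =-1$ we have $\beta \omega ^{k}=1$, so the corresponding geometric series degenerates and both numerator and denominator of the displayed fraction vanish. I would therefore evaluate $h(-1)$ from scratch as $\sum_{r=0}^{n-1}(-2)^{r}+\sum_{r=0}^{n-1}1=\frac{1-(-2)^{n}}{3}+n$. For even $n$ this equals $\frac{1-2^{n}}{3}+n$, which is strictly negative (hence nonzero) for $n\geq 4$ because $2^{n}>3n+1$; the small cases $n=3,4$ are settled directly by Theorem \ref{t2}, since $\det (\mathbb{j}_{3})=104\neq 0$ and $\det (\mathbb{j}_{4})\neq 0$. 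Combining the three cases shows that every eigenvalue of $\mathbb{j}_{n}$ is nonzero, so $\mathbb{j}_{n}$ is invertible for all $n\geq 3$.
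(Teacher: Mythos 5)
Your proof is correct, and although it follows the same broad strategy as the paper---use the Binet formula to write each eigenvalue $h(\omega^{k})$ of the circulant matrix in closed form, show none vanishes, and invoke [1, Lemma 1.1], with the cases $n=3,4$ settled by Theorem \ref{t2}---it differs in two substantive ways, both of which are improvements. First, you work with the correct eigenvalues $\sum_{r=0}^{n-1}j_{r}\omega^{kr}$ of $\mbox{circ}(j_{0},j_{1},\ldots,j_{n-1})$, whereas the paper computes $\sum_{r=1}^{n}j_{r}\omega^{kr-k}$, which are the eigenvalues of $\mbox{circ}(j_{1},\ldots,j_{n})$; this index shift was harmless for $\mathbb{J}_{n}=\mbox{circ}(J_{1},\ldots,J_{n})$ but is an error for $\mathbb{j}_{n}$, which begins at $j_{0}$. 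Accordingly your closed form $\frac{(2-j_{n})-(1+2j_{n-1})\omega^{k}}{1-\omega^{k}-2\omega^{2k}}$ (which checks out, using $\alpha^{n}\beta+\beta^{n}\alpha=-2j_{n-1}$) differs from the paper's $\frac{1-j_{n+1}+2\omega^{k}(2-j_{n})}{1-\omega^{k}-2\omega^{2k}}$. Second, and more importantly, you correctly isolate the degenerate point $\omega^{k}=-1$ (even $n$, $k=n/2$), where $\beta\omega^{k}=1$ makes the geometric-sum manipulation invalid (both numerator and denominator vanish, since $1-u-2u^{2}=(1-2u)(1+u)$), and you evaluate $h(-1)=\frac{1-2^{n}}{3}+n<0$ directly. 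The paper instead disposes of this case by asserting that $u=-1$ is not a root of $1-j_{n+1}+2(2-j_{n})u=0$ for $n\geq 5$; that assertion is false for even $n$, since at $u=-1$ the left side equals $2j_{n}-j_{n+1}-3=3(-1)^{n}-3$, which is $0$ whenever $n$ is even (e.g.\ $n=6$: $1-127+126=0$). So the paper's contradiction argument silently breaks at exactly the point your case analysis handles, and your direct evaluation is the correct repair; the remaining steps of your argument (the denominator factorization with $|\omega^{k}|=1\neq\frac12$, the reality argument via \eqref{omega} forcing $\omega^{k}=-1$, the positivity of $h(1)$, and $2^{n}>3n+1$ for $n\geq 4$) are all sound.
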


\begin{proof}
We show that $\det (\mathbb{j}_{3})=104\neq 0$ and $\det (\mathbb{j}%
_{4})=-675\neq 0$ by Theorem \ref{t2}. Then  $\mathbb{j}_{3}$ and $%
\mathbb{j}_{4}$ are invertible. Let $n\geq 5.$ {The} Binet formula for Jacobsthal-Lucas numbers {yields} $j_{n}=\alpha ^{n}+\beta ^{n}${,} where
$\alpha +\beta =1$ and $\alpha \beta =-2.$ Then we have%
\begin{eqnarray*}
h(\omega ^{k}) &=&\sum_{r=1}^{n}j_{r}\omega
^{kr-k}=\sum_{r=1}^{n}\left( \alpha ^{r}+\beta ^{r}\right) \omega
^{kr-k} \\
&=&\frac{\alpha (1-\alpha ^{n})}{1-\alpha \omega ^{k}}+\frac{\beta (1-\beta
^{n})}{1-\beta \omega ^{k}},\ \ (1-\alpha \omega ^{k},1-\beta \omega
^{k}\neq 0) \\
&=&\left( \frac{(\alpha +\beta )-(\alpha ^{n+1}+\beta ^{n+1})+\alpha \beta
\omega ^{k}(\alpha ^{n}+\beta ^{n})-2\alpha \beta \omega ^{k}}{1-\alpha
\omega ^{k}-\beta \omega ^{k}+\alpha \beta \omega ^{2k}}\right) \\
&=&\frac{1-j_{n+1}+2\omega ^{k}(2-j_{n})}{1-\omega ^{k}-2\omega ^{2k}}{,}\qquad  \ k=1,2,\ldots ,n-1.
\end{eqnarray*}%
We {are going to} show that there is {no} $\omega ^{k}$, $k=1,2,\ldots ,n-1$ such that $h(\omega ^{k})=0.$ If $1-j_{n+1}+2\omega
^{k}(2-j_{n})=0$
for $1-\omega ^{k}-2\omega ^{2k}\neq 0${, then} $\omega ^{k}=\frac{j_{n+1}-1}{%
2(2-j_{n})}$ would be a real number. By \eqref{omega}
we {would} have $\sin \left( \frac{2k\pi }{n}%
\right) =0$ {so that} $\omega ^{k}=-1$ for $0<\frac{2k\pi }{n}<2\pi .$
However $u=-1$  {is not} a root of the equation $1-j_{n+1}+2(2-j_{n})u=0$ $%
(n\geq 5)${,}  a contradiction. i.e., $h(\omega ^{k})\neq 0$ for any $\omega ^{k}$, {where} $k=1,2,\ldots ,n-1$ and $n\geq 5.$ Thus the proof is
completed by [1, Lemma 1.1].
\end{proof}

\begin{lemma}
If the matrix $\mathbb{S}= (s_{ij})_{i,j=1}^{n-2}$ is of the form%
\begin{equation*}
s_{ij}=%
\begin{cases}
1+2j_{n-1}, & i=j \\ 
j_{n}-2, & j=i+1 \\ 
0, & \mbox{otherwise}{,}%
\end{cases}%
\end{equation*}%
then $\mathbb{S}^{-1}=(s_{ij}^{^{\prime }})_{i,j=1}^{n-2}$ is {given by} 
\begin{equation*}
s_{ij}^{\prime } =%
\begin{cases}
\frac{(2-j_{n})^{j-i}}{(1+2j_{n-1})^{j-i+1}}, & j\geq i \\ 
0, & \mbox{otherwise.}%
\end{cases}%
\end{equation*}
\end{lemma}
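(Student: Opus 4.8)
The plan is to verify directly that the displayed matrix is the two-sided inverse, by forming the product $B := \mathbb{S}\,\mathbb{S}^{-1} = (b_{ij})_{i,j=1}^{n-2}$ and checking that $B = I_{n-2}$; since $\mathbb{S}$ is square, establishing $\mathbb{S}\,\mathbb{S}^{-1} = I_{n-2}$ suffices. The crucial structural observation is that $\mathbb{S}$ is upper bidiagonal, so in the $i$th row only the two entries $s_{ii} = 1+2j_{n-1}$ and $s_{i,i+1} = j_{n}-2$ are nonzero. Consequently the sum defining each product entry collapses to at most two terms:
\[
b_{ij} = \sum_{k=1}^{n-2} s_{ik}\, s_{kj}^{\prime} = (1+2j_{n-1})\, s_{ij}^{\prime} + (j_{n}-2)\, s_{i+1,j}^{\prime}.
\]
I would then split into the three cases $j=i$, $j>i$, and $j<i$, exactly as in the proof of the earlier Lemma for the matrix $A$ with diagonal $2J_{n}$ and superdiagonal $J_{n+1}-1$; the present statement is simply the Jacobsthal--Lucas analogue of that result.

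First, for $j=i$ I note that $s_{ii}^{\prime} = 1/(1+2j_{n-1})$ while $s_{i+1,i}^{\prime} = 0$ (its column index is smaller than its row index), so $b_{ii} = (1+2j_{n-1})\cdot \tfrac{1}{1+2j_{n-1}} = 1$. Next, for $j>i$ both $s_{ij}^{\prime}$ and $s_{i+1,j}^{\prime}$ are given by the nonzero branch of the formula; after substituting, one factors out $(2-j_{n})^{j-i-1}/(1+2j_{n-1})^{j-i}$, which reduces $b_{ij}$ to a scalar multiple of $(2-j_{n})+(j_{n}-2)$. This is the one genuine point of the argument: the two contributions cancel precisely because the superdiagonal entry $j_{n}-2$ is the negative of the base $2-j_{n}$ appearing in the inverse, so $b_{ij}=0$. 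Finally, for $j<i$ both $s_{ij}^{\prime}$ and $s_{i+1,j}^{\prime}$ vanish by definition, giving $b_{ij}=0$ trivially.

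There is essentially no hard step here; the computation is routine and mirrors the previous Lemma almost verbatim. The only thing to watch is the boundary row $i=n-2$, where the superdiagonal term is absent and $b_{n-2,j}=(1+2j_{n-1})\,s_{n-2,j}^{\prime}$ must be read off directly, yielding $1$ on the diagonal and $0$ strictly below, consistent with the general case. Once these cases are assembled, we conclude $\mathbb{S}\,\mathbb{S}^{-1}=I_{n-2}$, so the stated formula indeed gives $\mathbb{S}^{-1}$.
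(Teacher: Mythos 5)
Your proposal is correct and follows essentially the same route as the paper: both verify directly that $\mathbb{S}\,\mathbb{S}^{-1}=I_{n-2}$ entrywise, using the upper bidiagonal structure to collapse each product entry to at most two terms, with the diagonal giving $1$ and the case $j>i$ cancelling because $(2-j_n)=-(j_n-2)$. Your explicit treatment of the boundary row $i=n-2$ is a small point of care that the paper's proof leaves implicit, but it does not change the argument.
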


\begin{proof}
Let $\mathbb{B}{:=\mathbb{SS}^{-1}}=(b_{ij})$ 
{so that} $b_{ij}=\sum_{k=1}^{n-2}s_{ik}s_{kj}^{^{%
\prime }}$. {Clearly }
\begin{equation*}
b_{ii}=(1+2j_{n-1})\cdot\frac{1}{1+2j_{n-1}}=1.
\end{equation*}%
If $j> i,$ then
\begin{eqnarray*}
b_{ij} &=&\sum_{k=1}^{n-2}s_{ik}s_{kj}^{^{\prime
}}=s_{i,i+1}s_{i+1,j}^{^{\prime }}+s_{ii}s_{ij}^{^{\prime }} \\
&=&(j_{n}-2)\frac{(2-j_{n})^{j-i-1}}{(1+2j_{n-1})^{j-i}}+(1+2j_{n-1})\frac{%
(2-j_{n})^{j-i}}{(1+2j_{n-1})^{j-i+1}}=0;
\end{eqnarray*}%
{similar for $j<i$.} Thus $\mathbb{SS}^{-1}=I_{n-2}$.
\end{proof}

\begin{theorem}
\textcolor[rgb]{0.93,0.08,0.17}{Let $n\geq 3$. The} inverse of the matrix $%
\mathbb{j}_{n}$ is%
\begin{equation*}
\mathbb{j}_{n}^{-1}=circ(h_{0},h_{1},\ldots ,h_{n-1})
\end{equation*}%
where%
\begin{eqnarray*}
h_{0} &=&\frac{1}{2y_{n}}\left( \frac{9j_{n}-18+(10-8j_{n-2})y_{n}}{%
(1+2j_{n-1})^{2}}\right) \\
h_{1} &=&\frac{1}{2y_{n}}\left( \frac{4y_{n}-9}{1+2j_{n-1}}\right) \\
h_{2} &=&\frac{1}{2y_{n}}\left[ (4-j_{n-1}-2y_{n})\frac{(2-j_{n})^{n-3}}{%
(1+2j_{n-1})^{n-2}}\right. \\
&&\left. +\dsum\limits_{k=2}^{n-2}(2j_{k+1}-j_{k})\left( \frac{%
(2-j_{n})^{k-2}}{(1+2j_{n-1})^{k-1}}\right) \right] \\
h_{3} &=&\frac{1}{2y_{n}}\left[ ((4-j_{n-1}-2y_{n})(j_{n+1}-1)\right. \\
&&-(2j_{n-1}-j_{n-2})(1+2j_{n-1}))\frac{(2-j_{n})^{n-4}}{(1+2j_{n-1})^{n-2}}
\\
&&\left. +2\dsum\limits_{k=1}^{n-4}\left( (2j_{k+1}-j_{k})\frac{%
(2-j_{n})^{k-1}}{(1+2j_{n-1})^{k}}\right) \right] \\
h_{i} &\textcolor[rgb]{0.93,0.08,0.17}{:=}&\frac{1}{2y_{n}}\left( \frac{%
(4-j_{n-1}-2y_{n})(j_{n+1}+8j_{n}-2j_{n-1}-9((-2)^{n}+1)}{(1+2j_{n-1})^{2}}%
\right. \\
&&\left. -2j_{n}+j_{n-1}-\frac{(4j_{n-1}-2j_{n-2})(2-j_{n})}{1+2j_{n-1}}%
\right) \frac{(2-j_{n})^{n-i-1}}{(1+2j_{n-1})^{n-i+2}}
\end{eqnarray*}%
for $y_{n}=\frac{1}{2}(4-j_{n-1})+\frac{1}{2}\tsum\nolimits_{k=2}^{n-1}%
\left( 2j_{k}-j_{k-1}\right) \left( \frac{1+2j_{n-1}}{2-j_{n}}\right) ^{n-k}$
and $i=4,5,\ldots ,n-1.$
\end{theorem}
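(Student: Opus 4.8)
The plan is to mirror the computation already carried out for $\mathbb{J}_n^{-1}$, now starting from the block-triangularization $U_n = K_n\,\mathbb{j}_n\,M_n$ furnished by the proof of Theorem \ref{t2}. The key observation is that $U_n$ is not merely upper triangular: its trailing $(n-2)\times(n-2)$ principal submatrix is exactly the bidiagonal matrix $\mathbb{S}$ of the preceding lemma (diagonal entries $1+2j_{n-1}$, superdiagonal entries $j_n-2$), while its leading $2\times 2$ block is the upper triangular matrix $\bigl[\begin{smallmatrix} 2 & y_n' \\ 0 & y_n \end{smallmatrix}\bigr]$. The only obstruction to full block-diagonality is the coupling contained in the first two rows, namely the entry $y_n'$ together with the $2\times(n-2)$ north-east block. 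So the first step is to build an upper triangular matrix $R_n'$, whose rows $3,\dots,n$ coincide with those of the identity and whose first two rows are chosen precisely to annihilate this coupling by column operations, exactly as $R_n$ does in the Jacobsthal case.

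Then I would verify the identity
\[
K_n\,\mathbb{j}_n\,M_n R_n' = G' \oplus \mathbb{S}, \qquad G' := \diag(2,y_n),
\]
which is consistent with $\det(U_n)=2(1+2j_{n-1})^{n-2}y_n$ from Theorem \ref{t2}. Writing $T_n' := M_n R_n'$ and inverting both sides gives
\[
\mathbb{j}_n^{-1} = T_n'\bigl((G')^{-1}\oplus \mathbb{S}^{-1}\bigr)K_n,
\]
where $(G')^{-1}=\diag(\tfrac12,\tfrac1{y_n})$ and $\mathbb{S}^{-1}$ is supplied in closed form by the preceding lemma. The invertibility of $\mathbb{j}_n$ used here is the earlier theorem, and it also guarantees (as recalled at the start of Section 3) that $\mathbb{j}_n^{-1}$ is itself circulant.

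This circulant property is what makes the result tractable: a circulant matrix is determined by a single row, so it suffices to extract just the last row of $\mathbb{j}_n^{-1}$. I would therefore compute only the last row of $T_n'$ --- which, because the last row of $M_n$ is the simple vector $(0,1,-1,0,\dots,0)$, reduces to a short combination of the rows of $R_n'$ --- then multiply it on the right by $(G')^{-1}\oplus \mathbb{S}^{-1}$ and by $K_n$, and read off the entries $h_1,h_2,\dots,h_{n-1},h_0$ in the circulant ordering. The raw expressions will emerge as sums of the form $\sum_k (2j_{k+1}-j_k)\,(2-j_n)^{\ast}/(1+2j_{n-1})^{\ast}$ together with boundary terms coming from the first two rows.

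The main obstacle is the final algebraic simplification of these last-row entries into the stated closed forms. This step relies on the Jacobsthal-Lucas recurrence $j_{k}=j_{k-1}+2j_{k-2}$ to collapse the interior sums by telescoping (the analogue of the bracketed $J_{k+3}-J_{k+2}-2J_{k+1}=0$ cancellation in the $\mathbb{J}_n$ proof) and on the Binet formula $j_n=2^n+(-1)^n$ from \eqref{20} to resolve the geometric-type sums; it is precisely the parity carried by $(-1)^n$ that produces the $(-2)^n$ term visible in the formula for $h_i$. Care must be taken with the index bookkeeping, since the circulant here is labelled $h_0,\dots,h_{n-1}$ rather than $m_1,\dots,m_n$, and the small cases $n=3,4$ should be checked directly against $\det(\mathbb{j}_3)=104$ and $\det(\mathbb{j}_4)=-675$. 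Once the last row is in closed form, circularity finishes the proof.
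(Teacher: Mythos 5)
Your proposal is correct and follows essentially the same route as the paper: the paper constructs exactly the corrective matrix you call $R_n'$ (denoted $Z_n$ there), obtains $K_n\mathbb{j}_nM_nZ_n=\mathbb{G}\oplus \mathbb{S}$ with $\mathbb{G}=\diag(2,y_n)$, and computes $\mathbb{j}_n^{-1}=\mathbb{T}_n(\mathbb{G}^{-1}\oplus \mathbb{S}^{-1})K_n$ with $\mathbb{T}_n=M_nZ_n$, using the lemma for $\mathbb{S}^{-1}$ and the circulant-inverse-is-circulant fact to read off only the last row. The telescoping via the Jacobsthal--Lucas recurrence and the last-row bookkeeping you describe are precisely how the paper arrives at the closed forms for $h_0,\ldots,h_{n-1}$.
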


\begin{proof}
Let%
\begin{eqnarray*}
Z_{n} &=&\left[
\begin{array}{cccc}
1 & -\frac{1}{2}y_{n}^{\prime } & \frac{y_{n}^{\prime }}{4y_{n}}%
(j_{n-1}-2j_{0})+\frac{1}{2}j_{n-1} & \frac{y_{n}^{\prime }}{4y_{n}}%
(j_{n-2}-2j_{n-1})+\frac{1}{2}j_{n-2} \\
0 & 1 & \frac{2j_{0}-j_{n-1}}{2y_{n}} & \frac{2j_{n-1}-j_{n-2}}{2y_{n}} \\
0 & 0 & 1 & 0 \\
0 & 0 & 0 & 1 \\
\vdots & \vdots & \vdots & \vdots \\
0 & 0 & 0 & 0 \\
0 & 0 & 0 & 0%
\end{array}%
\right. \\
&&\left. ~\ \ \ \ \ \ \ \ \ \ \ \ \ \ \ \ \ \ \ \ \ \ \ \ \ \ \ \ \ \ \ \ \
\ \ \ \ \ \ \ \
\begin{array}{cc}
\ldots & \frac{y_{n}^{\prime }}{4y_{n}}(j_{2}-2j_{3})+\frac{1}{2}j_{2} \\
\ldots & \frac{2j_{3}-j_{2}}{2y_{n}} \\
\ldots & 0 \\
\ldots & 0 \\
\ddots & \vdots \\
\ldots & 0 \\
\ldots & 1%
\end{array}%
\right]
\end{eqnarray*}%
and $\mathbb{G}=diag(2,y_{n})$ where $y_{n}=\frac{1}{2}(4-j_{n-1})+\frac{1}{2%
}\tsum\nolimits_{k=2}^{n-1}\left( 2j_{k}-j_{k-1}\right) \left( \frac{%
1+2j_{n-1}}{2-j_{n}}\right) ^{n-k}$ and $y_{n}^{\prime
}=\tsum\nolimits_{k=1}^{n-1}j_{k}\left( \frac{1+2j_{n-1}}{2-j_{n}}\right)
^{n-k-1}.$ Then we obtain%
\begin{equation*}
K_{n}\mathbb{j}_{n}M_{n}Z_{n}=\mathbb{G}\oplus \mathbb{S}
\end{equation*}%
where $\mathbb{G}\oplus \mathbb{S}$ is the direct sum of the matrices $%
\mathbb{G}$ and $\mathbb{S}$. If $\mathbb{T}_{n}=M_{n}Z_{n},$ then we have%
\begin{equation*}
\mathbb{j}_{n}^{-1}=\mathbb{T}_{n}(\mathbb{G}^{-1}\oplus \mathbb{S}%
^{-1})K_{n}.
\end{equation*}

Since the matrix $\mathbb{j}_{n}$ is circulant, the inverse matrix $\mathbb{j%
}_{n}^{-1}$\ is circulant from Lemma 1.1 [1, p. 9791]. Let%
\begin{equation*}
\mathbb{j}_{n}^{-1}=circ(h_{0},h_{1},\ldots ,h_{n-1}).
\end{equation*}%
Since the last row of the matrix $\mathbb{T}_{n}$ is
\begin{equation*}
\left( 0,1,\frac{2j_{0}-j_{n-1}}{2y_{n}}-1,\frac{2j_{n-1}-j_{n-2}}{2y_{n}},%
\frac{2j_{n-2}-j_{n-3}}{2y_{n}},\ldots ,\frac{2j_{4}-j_{3}}{2y_{n}},\frac{%
2j_{3}-j_{2}}{2y_{n}}\right) ,
\end{equation*}%
the last row elements of the matrix $\mathbb{j}_{n}^{-1}$ are%
\begin{eqnarray*}
h_{1} &=&\frac{1}{2y_{n}}\left( \frac{4y_{n}-9}{1+2j_{n-1}}\right)  \\
h_{2} &=&\frac{1}{2y_{n}}\left[ (4-j_{n-1}-2y_{n})\frac{(2-j_{n})^{n-3}}{%
(1+2j_{n-1})^{n-2}}\right.  \\
&&\left. +\dsum\limits_{k=2}^{n-2}(2j_{k+1}-j_{k})\left( \frac{%
(2-j_{n})^{k-2}}{(1+2j_{n-1})^{k-1}}\right) \right]
\end{eqnarray*}%
\begin{eqnarray*}
h_{3} &=&\frac{1}{2y_{n}}\left[
((4-j_{n-1}-2y_{n})(j_{n+1}-1)-(2j_{n-1}-j_{n-2})(1+2j_{n-1}))\frac{%
(2-j_{n})^{n-4}}{(1+2j_{n-1})^{n-2}}\right.  \\
&&\left. +2\dsum\limits_{k=1}^{n-4}\left( (2j_{k+1}-j_{k})\frac{%
(2-j_{n})^{k-1}}{(1+2j_{n-1})^{k}}\right) \right]  \\
h_{i} &=&\frac{1}{2y_{n}}\left( \frac{%
(4-j_{n-1}-2y_{n})(j_{n+1}+8j_{n}-2j_{n-1}-9((-2)^{n}+1)}{(1+2j_{n-1})^{2}}%
\right.  \\
&&\left. -2j_{n}+j_{n-1}-\frac{(4j_{n-1}-2j_{n-2})(2-j_{n})}{1+2j_{n-1}}%
\right) \frac{(2-j_{n})^{n-i-1}}{(1+2j_{n-1})^{n-i+2}} \\
h_{0} &=&\frac{1}{2y_{n}}\left( \frac{9j_{n}-18+(10-8j_{n-2})y_{n}}{%
(1+2j_{n-1})^{2}}\right)
\end{eqnarray*}%
where $y_{n}=\frac{1}{2}(4-j_{n-1})+\frac{1}{2}\tsum\nolimits_{k=2}^{n-1}%
\left( 2j_{k}-j_{k-1}\right) \left( \frac{1+2j_{n-1}}{2-j_{n}}\right) ^{n-k}$
and $i=4,5,\ldots ,n-1.\ $Since the matrix $\mathbb{j}_{n}^{-1}$ is a
circulant matrix and its last row is known, the proof is completed.
\end{proof}

\begin{corollary}
Let the matrix $K_{n}$ be as in (\ref{50}). Then {%
\begin{equation*}
K_{n}^{-1}:=%
\begin{bmatrix}
1 & 0 \\ 
C & D%
\end{bmatrix}%
{,}
\end{equation*}%
} where 
\begin{equation*}
C {:=}\left( \frac {j_{n-1}}{2}\ \frac{j_{n-2}}{2}\ \frac{j_{n-3}}{2} \
\ldots \frac{ j_{1}}{2}\right)_{(n-1)\times 1}^{T}
\end{equation*}
and $D$ is {the} $(n-1)\times (n-1)$ Hankel matrix {in which} the first row
is $\left[ J_{n-1},J_{n-2},\ldots ,J_{1}\right] $ and the last column is $%
\left[ J_{1},0,\ldots ,0\right] ^{T}.$
\end{corollary}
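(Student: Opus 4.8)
The plan is to avoid inverting $K_n$ from scratch and instead to recover $K_n^{-1}$ from the matrix $P_n^{-1}$ already obtained in the corollary on $P_n$, exploiting that $K_n$ is a single elementary modification of $P_n$. Comparing~(\ref{5}) and~(\ref{50}), the matrices $P_n$ and $K_n$ agree in every entry except the $(2,1)$ position, which is $-1$ in $P_n$ and $-\tfrac12$ in $K_n$. Since the first row of $P_n$ is $e_1^{T}=[1,0,\dots,0]$, adding $\tfrac12$ times row $1$ of $P_n$ to its second row changes that $-1$ into $-\tfrac12$ and leaves every other row fixed; hence $K_n=E_nP_n$ with $E_n:=I_n+\tfrac12 e_2e_1^{T}$, so that
\[
K_n^{-1}=P_n^{-1}E_n^{-1},\qquad E_n^{-1}=I_n-\tfrac12 e_2e_1^{T}.
\]

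First I would read the structure of $P_n^{-1}$ off the preceding corollary: its first row is $[1,0,\dots,0]$, while its rows $2,\dots,n$ form the Hankel block with first row $[J_n,J_{n-1},\dots,J_1]$ and last column $[1,0,\dots,0]^{T}$. Right multiplication by $E_n^{-1}$ is a pure column operation, $P_n^{-1}E_n^{-1}=P_n^{-1}-\tfrac12(P_n^{-1}e_2)e_1^{T}$, which subtracts half the second column of $P_n^{-1}$ from its first column and leaves columns $2,\dots,n$ and the entire first row untouched. Because the block $D$ in the asserted form occupies exactly rows $2,\dots,n$ and columns $2,\dots,n$, it is inherited verbatim from $P_n^{-1}$; reading that sub-block off the Hankel description gives first row $[J_{n-1},\dots,J_1]$ and last column $[J_1,0,\dots,0]^{T}$, precisely the claimed $D$, while the unchanged first row supplies the top strip $[\,1\ \ 0\,]$ of the block form.

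What remains is the first column. From the Hankel description the first two columns of $P_n^{-1}$ are $[1,J_n,J_{n-1},\dots,J_2]^{T}$ and $[0,J_{n-1},J_{n-2},\dots,J_1]^{T}$, so subtracting half the second from the first produces a column with top entry $1$ and with entry $J_{n-i+2}-\tfrac12 J_{n-i+1}$ in row $i$ for $2\le i\le n$. The only genuine point is to check that this equals the entry $\tfrac12 j_{n-i+1}$ of $C$, i.e.\ the identity $2J_{k+1}-J_k=j_k$; this is where the special arithmetic of the sequences enters, and it follows immediately from the Binet formulas~(\ref{1}) and~(\ref{20}) since $2J_{k+1}-J_k=\tfrac13[2^{k+2}-2^{k}+3(-1)^{k}]=2^{k}+(-1)^{k}=j_k$. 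Everything else is mere bookkeeping on the known $P_n^{-1}$, so this identity is the heart of the proof. As a fully independent check one could instead verify the asserted inverse directly, by forming $K_n\begin{bmatrix}1&0\\ C&D\end{bmatrix}$ and confirming it equals $I_n$; that computation again collapses through the same identity $2J_{k+1}-J_k=j_k$ together with the recurrence $J_k=J_{k-1}+2J_{k-2}$.
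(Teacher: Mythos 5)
Your proof is correct, but it takes a genuinely different route from the paper. The paper's own proof is a one-line appeal to direct computation: it inverts $K_n$ by applying elementary row operations to the augmented matrix $[K_n\,|\,I_n]$, exactly as it did for $P_n$, and leaves all details to the reader. You instead exploit the relation between the two matrices: since $K_n$ and $P_n$ differ only in the $(2,1)$ entry and row $1$ of $P_n$ is $e_1^T$, the factorization $K_n=\left(I_n+\tfrac12 e_2e_1^T\right)P_n$ is valid, whence $K_n^{-1}=P_n^{-1}-\tfrac12\left(P_n^{-1}e_2\right)e_1^T$, so columns $2,\dots,n$ and row $1$ are inherited verbatim from the earlier corollary on $P_n^{-1}$ (giving the block $D$ and the top strip), and the whole statement collapses to the single identity $2J_{k+1}-J_k=j_k$, which your Binet computation verifies correctly (it also follows from $2J_{k-1}=J_{k+1}-J_k$ together with the classical $j_k=J_{k+1}+2J_{k-1}$). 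I checked your bookkeeping: the first two columns of $P_n^{-1}$ are indeed $[1,J_n,J_{n-1},\dots,J_2]^T$ and $[0,J_{n-1},J_{n-2},\dots,J_1]^T$, and for $n=4$ your recipe yields first column $\left[1,\tfrac72,\tfrac52,\tfrac12\right]^T=\left[1,\tfrac{j_3}{2},\tfrac{j_2}{2},\tfrac{j_1}{2}\right]^T$, which multiplies against $K_4$ to the identity. What each approach buys: the paper's is self-contained (no reliance on the $P_n$ corollary) but is really an assertion that a Gaussian elimination works out; yours is a complete, checkable argument that moreover explains structurally why the first column of $K_n^{-1}$ carries halved Jacobsthal--Lucas numbers while the Hankel block carries Jacobsthal numbers --- the rank-one perturbation of $P_n$ mixes the first two Jacobsthal columns, and $2J_{k+1}-J_k=j_k$ is precisely that mixture. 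The only dependence is on the preceding corollary for $P_n^{-1}$, which is legitimate since it comes earlier in the paper.
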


\begin{proof}
The matrix $K_{n}^{-1}$ is obtained easily by  applying elementary row operations to {the augmented matrix} $[K_{n}|I_{n}].$
\end{proof}

\end{document}